\theoremstyle{plain}
\newtheorem{theorem}{Theorem}[section]
\newtheorem{lemma}[theorem]{Lemma}
\theoremstyle{definition}
\newtheorem{remark}[theorem]{Remark}
\numberwithin{equation}{section}
\newcommand{\ds}{\displaystyle}
\newcommand{\dint}{\ds\int}
\newcommand{\dx}[1]{\; {\rm d}#1}
\newcommand{\eqskip}{ \vspace*{2mm}\\ }
\newcommand{\R}{\mathbb{R}}
\newcommand{\N}{\mathbb{N}}
\newcommand{\Z}{\mathbb{Z}}
\DeclareMathOperator{\spn}{span}
\def\Xint#1{\mathchoice
{\XXint\displaystyle\textstyle{#1}}%
{\XXint\textstyle\scriptstyle{#1}}%
{\XXint\scriptstyle\scriptscriptstyle{#1}}%
{\XXint\scriptscriptstyle\scriptscriptstyle{#1}}%
\!\int}
\def\XXint#1#2#3{{\setbox0=\hbox{$#1{#2#3}{\int}$ }
\vcenter{\hbox{$#2#3$ }}\kern-.6\wd0}}
\def\dashint{\Xint-}
\begin{document}

\title[Summation formulae for Schr\"odinger operators]{Summation formula inequalities for eigenvalues of Schr\"odinger operators}

%\thanks{J.B.K. was supported by a grant within the scope of FCT's project PTDC/\ MAT/\ 101007/2008 and by a fellowship of the
%Alexander von Humboldt Foundation, Germany. Both authors were partially supported by FCT's projects PTDC/\ MAT/\ 101007/2008 and
%PEst-OE/MAT/UI0208/2011}

\author[P. Freitas and J.B. Kennedy]{Pedro Freitas \and James B. Kennedy}

\address{Department of Mathematics, Faculty of Human Kinetics {\rm and}
 Group of Mathematical Physics, Universidade de Lisboa, Complexo Interdisciplinar, Av.~Prof.~Gama Pinto~2,
 P-1649-003 Lisboa, Portugal}
\email{psfreitas@fc.ul.pt}

\address{Institute of Analysis, University of Ulm, Helmholtzstr.~18, D-89069 Ulm, Germany {\rm{and}}
Institute of Analysis, Dynamics and Modelling, University of Stuttgart, Pfaffenwaldring 57, D-70569 Stuttgart, Germany}
\email{james.kennedy@mathematik.uni-stuttgart.de}

%\date{\today}

\keywords{Eigenvalues, Trace formulae}

\begin{abstract}
We derive inequalities for sums of eigenvalues of Schr\"{o}dinger operators on finite intervals and tori. In 
the first of these cases, the inequalities converge to the classical trace formulae in the limit as the number
of eigenvalues considered approaches infinity.
\end{abstract}

\subjclass[2010]{Primary: 34L15; Secondary: 34L20, 34L40}

\maketitle

\section{Introduction}

One aspect of the classical theory of Sturm-Liouville problems is concerned with the existence of trace formulae relating,
for instance, a regularized (infinite) sum of eigenvalues to the potential. Given the problem
\begin{equation}
\label{stlibdd}
\left\{
\begin{array}{l}
-u''(x) + q(x) u(x) = \lambda u(x)\eqskip
u(0) =  u(\pi) =  0
\end{array},
\right.
\end{equation}
the classical example due to Gelfand and Levitan in $1953$~\cite{gele} reads as follows
\begin{equation}
\label{classtr}
\sum_{k=1}^{\infty}\left[\lambda_{k}-k^2-\frac{\ds 1}{\ds \pi}\dint_{0}^{\pi} q(t)\dx{t}\right]=
\frac{\ds 1}{\ds 2\pi}\dint_{0}^{\pi} q(t)\dx{t} - \frac{\ds q(0) + q(\pi)}{\ds 4},
\end{equation}
where $\lambda_{k} = \lambda_{k}(q)$ denotes the $k^{\rm th}$ eigenvalue corresponding to the potential $q$.
From the Weyl asymptotics for the eigenvalues of~(\ref{stlibdd}), namely,
\begin{equation}
\label{asymp}
\lambda_{k} = k^2 + \frac{\ds 1}{\ds \pi}\dint_{0}^{\pi} q(t)\dx{t} + {\rm{O}}(k^{-2}),
\end{equation}
we have the convergence of the series on the left-hand side of~\eqref{classtr}.

This and other examples of this type of identity may be found in the books by Levitan and Sargsjan~\cite{lesa,lesa2} (see also~\cite{hakr}) 
and have more recently been shown to have extensions to the case of the perturbed harmonic oscillator on the whole real line -- see~\cite{puso} 
and the references therein.

The main purpose of this paper is to show that these trace identities are limiting situations of inequalities satisfied by finite sums 
of eigenvalues. As we shall see below, in the case of equation~(\ref{stlibdd}) and the corresponding trace formula~(\ref{classtr}), 
if we expand the potential $q(x)$ in a Fourier series
\begin{equation}
\label{fourier}
q(x) = \frac{\ds q_{0}}{\ds 2} + \sum_{k=1}^{\infty} q_{k} \cos(k x),
\end{equation}
where
\begin{equation}
\label{eq:fourier-int}
q_{k} = \frac{\ds 2}{\ds \pi}\dint_{0}^{\pi}q(x) \cos(kx) \dx{x}, \;\; k=0,1,\dots,
\end{equation}
we actually have
\begin{equation}
\label{eq:ex}
\sum_{k=1}^{n}\left[\lambda_{k}-k^2-\frac{\ds 1}{\ds \pi}\dint_{0}^{\pi} q(t)\dx{t}\right]\leq
-\frac{\ds 1}{\ds 2}\sum_{k=1}^{n} q_{2k},
\end{equation}
(see Theorem~\ref{th:finitetracedir}) and it is not very difficult to check that the right-hand side converges to the right-hand
side of~(\ref{classtr}). We also show that there exists no lower bound for the left-hand side of~\eqref{eq:ex}
depending on a finite number of Fourier coefficients alone -- see Remark~\ref{nolowerbound}. It is, however, possible to use our results in combination with an argument given in~\cite{dikii} to obtain an alternative proof of~\eqref{classtr} and we explore this approach in Section~\ref{sec:trace}.

Such inequalities, including for other problems such as the case of Neumann boundary conditions and $N$-dimensional
flat tori, in fact follow from elementary test function arguments. The general principle is that if the potential $q$ is written out
as a Fourier series as in \eqref{fourier}, the eigenfunctions of the corresponding zero-potential problem (in one dimension, sines and
cosines) behave well when tested against the resulting series. The resulting sharp inequalities in terms of the Fourier coefficients
of $q$, which are completely explicit if $q$ is known, also contain interesting special cases under various less explicit assumptions
on $q$ (cf.~Theorems~\ref{th:app} and \ref{torus}).
We remark that the case of (non-periodic) Schr\"odinger operators on the whole line may be addressed via similar methods and this
will appear in another paper~\cite{frke}.

Although the underlying principle is quite simple, surprisingly, its application in this context appears to be new and we have not been able 
to find any similar results in the literature. The earliest results regarding \emph{finite} sums of eigenvalues seem to be those for sums of 
reciprocals of eigenvalues of the Laplacian, typically on inhomogeneous membranes, such as the classical work of P\'olya and Schiffer in 
$1954$~\cite[Chap.~III]{posc}; see also, for example, \cite{laug,lapu}. In higher dimensions, there are bounds for sums of eigenvalues and
spectral zeta functions (cf.~\eqref{speczeta}) of the Laplacian based on the geometry of the underlying domain, such as in \cite{dittm,lasi1,lasi}.

For Schr\"odinger operators, besides the vast literature on Lieb--Thirring inequalities (which are concerned with rather different issues
and bounds from \eqref{eq:ex}; see, e.g., \cite{lawe}), there are well-established bounds on the number of eigenvalues less that a given 
positive constant as in \cite{liya}, which also cover the case of Schr\"odinger operators on domains; see also \cite{flm} and the references 
therein. However, these are all of a fundamentally different nature from \eqref{eq:ex}, typically involving either estimates purely in terms 
of eigenvalues, eigenfunctions and/or geometric or dimensional quantities, or else integral 
expressions for $q$ such as of the form $\int_\Omega (q+\alpha)^{N/2}_- \dx{x}$ for an arbitrary given constant $\alpha \geq 0$ and 
dimension $N \geq 3$, as in \cite{liya}.

This paper is organized as follows. Section~\ref{sec:prelim} lays out the general setting and notation. In Section~\ref{sec:finite}, we 
consider summation bounds of the form of~\eqref{eq:ex} for eigenvalues of Schr\"odinger operators on finite intervals subject to either 
Dirichlet or Neumann boundary conditions (see Theorems ~\ref{th:finitetracedir} and~\ref{th:finitetraceneu}, respectively).  We also sum
the one-dimensional Dirichlet and Neumann bounds in Theorem~\ref{finitetracecomb}, obtaining a bound 
independent of the Fourier coefficients of $q$, and a generalization to ``zeta function''-type bounds, where powers of eigenvalues are
considered (see Theorem~\ref{th:sumpower}).

We then apply Theorems ~\ref{th:finitetracedir} and~\ref{th:finitetraceneu} to a particular class of potentials in Section~\ref{sec:app} (see Theorem~\ref{th:app}),
obtaining a simplified eigenvalue bound in dimension one under additional assumptions on the potential $q$, which include the case where 
$q$ is convex. We show that one can essentially trivially obtain a similar bound on flat tori
in Section~\ref{sec:torus}.

In Section~\ref{sec:trace} we consider the relationship between our finite bounds and (known and potential) trace formulae. We show in particular how our one-dimensional results (Theorems ~\ref{th:finitetracedir} and~\ref{th:finitetraceneu}) 
 can be used to replace some of the arguments of Diki\u{\i}'s proof \cite{dikii} of the Gelfand--Levitan formula \eqref{classtr}, which are arguably more natural than their counterparts in \cite{dikii}.

The final Sections~\ref{sec:zeta} and~\ref{sec:equality} are appendiceal, the former giving generic result which allows our bounds to be generalized 
to powers of eigenvalues (i.e.~zeta functions), and the latter proving the sharpness of (the finite versions of) our inequalities: roughly speaking, 
equality can be achieved in the inequalities for finite sums only if the potential is constant.

\section{Notation and preliminaries\label{sec:prelim}}

We will consider the general Schr\"odinger eigenvalue equation
\begin{equation}
\label{lapln}
	-\Delta u(x) + q(x) u(x) = \lambda u(x),
\end{equation}
where $\Delta = \sum_{i=1}^N \frac{\partial^2}{\partial x_i^2}$ is the Laplace operator on either a finite interval ($N=1$) or an $N$-dimensional torus.
In either case we understand the problem in the usual weak sense. If $q\equiv 0$, \eqref{lapln} reduces to the usual 
Laplacian eigenvalue problem, i.e.~the Helmholtz equation. For general $q \not\equiv 0$, unless otherwise specified we 
make the standing assumption throughout the paper that $q$ may be expanded as an absolutely convergent Fourier series in terms of the eigenfunction 
of the Helmholtz equation (cf.~\eqref{fourier}).

Under these assumptions the operator associated with \eqref{lapln} admits a discrete sequence of eigenvalues
$\lambda_1(q) < \lambda_2(q) \leq \ldots \to \infty$, which we repeat according to multiplicities. We will often abbreviate $\lambda_n(q)$ as 
$\lambda_n$ if there is no danger of confusion, and we will write $\mu_n(q)$ (or $\mu_n$) instead of $\lambda_n(q)$ for the Neumann eigenvalues. 
An (arbitrary) eigenfunction associated with $\lambda_n(q)$ or $\mu_n(q)$ will be denoted by $\varphi_n$. For the case of a zero potential, 
that is, the ordinary Laplacian or Helmholtz 
equation, we will in general write $\lambda_n(0)$ (or $\mu_n(0)$ as appropriate) for the $n^{\rm th}$ ordered eigenvalue and $\psi_n$ for any 
corresponding eigenfunction.

For either a bounded interval or a torus, which we for now denote generically by $\Omega$, given a potential $q:\Omega\to\R$ and a test function $\phi \in V = H^1_0(\Omega)$ or $H^1(\Omega)$ as appropriate, we denote by
\begin{equation}
\label{rayleighsum}
	\mathcal{R}[q,\phi]:= \frac{Q(\phi)}{\|\phi\|_2^2}:= \frac{\dint_\Omega |\nabla\phi(x)|^2\dx{x}+\dint_\Omega q(x)\phi^2(x)\dx{x}}
{\dint_\Omega \phi^2(x)\dx{x}}
\end{equation}
the Rayleigh quotient associated with $q$ at $\phi$, where $Q(\phi)=Q(\phi,\phi)$ is the bilinear form associated with the Schr\"odinger operator. 
A standard generalization of the usual min-max formula for the eigenvalues 
of \eqref{lapln} states that if $\phi_1,\ldots,\phi_n$ is a collection of $n \geq 1$ such test functions mutually orthogonal in $L^2(\Omega)$, then
\begin{equation}
\label{sumprinciple}
	\sum_{k=1}^{n} \lambda_k(q) \leq \sum_{k=1}^{n} \mathcal{R}[q,\phi_k]
\end{equation}
(see, e.g., \cite{band}). If $q\equiv 0$ and we consider \eqref{lapln} with Dirichlet boundary conditions on $(0,\pi)$, then $\lambda_n(0) = n^2$ 
with associated eigenfunction $\psi_n(x) = \sin(nx)$, meaning in particular that if $s>1/2$, then the infinite sum of powers of eigenvalues
\begin{displaymath}
	\sum_{k=1}^\infty \lambda_k^{-s}(0) = \sum_{k=1}^\infty \frac{1}{k^{2s}} = \zeta(2s)
\end{displaymath}
equals the Riemann zeta function, which we write as $\zeta(2s) =: \zeta_0(s)$. By way of analogy we define, as standard, the generalized 
(or spectral) zeta function associated with the potential $q$ as
\begin{equation}
\label{speczeta}
	\zeta_q(s) := \sum_{k=1}^\infty \lambda_k^{-s}(q).
\end{equation}
Finally, we denote by $\langle \,.\,,\,.\, \rangle$ the usual inner product in $L^2(\Omega)$, and by $\|\,.\,\|_p$ the $L^p$-norm on 
$\Omega$, $1\leq p \leq \infty$.

\section{Summation bounds on finite intervals}
\label{sec:finite}

We first treat one-dimensional Schr\"odinger operators on the interval $(0,\pi)$. This means we can write the potential $q$ as a cosine Fourier series
as in~\eqref{fourier}, where we note in particular that $q_0/2$ is the average value of $q$,
\begin{displaymath}
\frac{q_0}{2} = \dashint_0^\pi q(x)\dx{x} := \frac{1}{\pi}\dint_0^\pi q(x)\dx{x}.
\end{displaymath}
Our starting point is the following bound for the Dirichlet problem.
\begin{theorem}
\label{th:finitetracedir}
If in the problem \eqref{stlibdd} the potential $q$ admits the expansion \eqref{fourier}, then its eigenvalues $\lambda_k = \lambda_k(q)$ satisfy
\begin{equation}
\label{finitetracedir}
\sum_{k=1}^{n}\left(\lambda_{k}-k^2-\frac{\ds q_{0}}{\ds 2}\right)\leq
-\frac{\ds 1}{\ds 2}\sum_{k=1}^{n} q_{2k}.
\end{equation}
for all $n\geq 1$. Equality for any $n\geq 1$ implies $q$ is constant. As $n$ goes to infinity, the right-hand side of the above inequality converges to
\begin{equation}
\label{tracedir}
\frac{1}{2}\,\dashint_0^\pi q(x)\dx{x}- \frac{\ds q(0) + q(\pi)}{\ds 4}.
\end{equation}
\end{theorem}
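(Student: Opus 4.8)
The plan is to apply the summation form of the min--max principle \eqref{sumprinciple} with the eigenfunctions of the free (zero-potential) Dirichlet problem as test functions, and then simply to compute. Since the functions $\psi_k(x)=\sin(kx)$, $k=1,\dots,n$, lie in $H^1_0(0,\pi)$ and are mutually orthogonal in $L^2(0,\pi)$, \eqref{sumprinciple} gives
\begin{equation*}
\sum_{k=1}^n \lambda_k(q)\leq \sum_{k=1}^n \mathcal{R}[q,\psi_k],
\end{equation*}
so that everything reduces to evaluating the Rayleigh quotient $\mathcal{R}[q,\psi_k]$.

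First I would record the elementary integrals $\|\psi_k\|_2^2=\int_0^\pi \sin^2(kx)\dx{x}=\pi/2$ and $\int_0^\pi |\psi_k'(x)|^2\dx{x}=k^2\pi/2$. For the potential term I would use the half-angle identity $\sin^2(kx)=\tfrac12\bigl(1-\cos(2kx)\bigr)$ together with $\int_0^\pi q(x)\dx{x}=\pi q_0/2$ and the Fourier coefficient formula \eqref{eq:fourier-int} at index $2k$, i.e.\ $\int_0^\pi q(x)\cos(2kx)\dx{x}=\tfrac{\pi}{2}q_{2k}$, to obtain $\int_0^\pi q(x)\psi_k^2(x)\dx{x}=\tfrac{\pi}{4}(q_0-q_{2k})$. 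Dividing by $\|\psi_k\|_2^2=\pi/2$ yields $\mathcal{R}[q,\psi_k]=k^2+\tfrac{q_0}{2}-\tfrac{q_{2k}}{2}$; summing over $k=1,\dots,n$ and rearranging produces exactly \eqref{finitetracedir}.

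For the equality statement I would invoke the analysis of the equality case of \eqref{sumprinciple} carried out in Section~\ref{sec:equality}: equality in the summation min--max with the test functions $\psi_1,\dots,\psi_n$ forces $\spn\{\psi_1,\dots,\psi_n\}$ to be invariant under the Schr\"odinger operator. Applying the operator to $\psi_1=\sin x$ then shows $q\sin x$ must lie in this finite-dimensional span; expanding $q$ via \eqref{fourier} and comparing the highest Fourier sine frequencies appearing in $q\sin(nx)$, which cannot cancel, forces every $q_k$ with $k\geq 1$ to vanish, i.e.\ $q$ to be constant. (For $n=1$ this is immediate: equality says $\sin x$ is the first eigenfunction, so $-\psi_1''+q\psi_1=\lambda_1\psi_1$ gives $q\equiv\lambda_1-1$ on $(0,\pi)$.)

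Finally, for the convergence of the right-hand side, I would evaluate the Fourier series \eqref{fourier} at $x=0$ and $x=\pi$ --- legitimate since absolute convergence makes $q$ continuous --- obtaining $q(0)+q(\pi)=q_0+2\sum_{k=1}^\infty q_{2k}$, and hence $-\tfrac12\sum_{k=1}^\infty q_{2k}=\tfrac{q_0}{4}-\tfrac{q(0)+q(\pi)}{4}$, which is precisely \eqref{tracedir} once one recalls that $q_0/2=\dashint_0^\pi q(x)\dx{x}$. There is no serious obstacle in any of this; the only points requiring a little care are verifying that the $\psi_k$ are admissible test functions for \eqref{sumprinciple} and a rigorous treatment of the equality case, for which I would rely on Section~\ref{sec:equality}.
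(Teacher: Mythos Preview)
Your argument for the inequality and for the convergence of the right-hand side is essentially identical to the paper's: both use $\psi_k=\sin(kx)$ as test functions in \eqref{sumprinciple}, compute $\int_0^\pi q\sin^2(kx)\dx{x}=\tfrac{\pi}{4}(q_0-q_{2k})$ via the half-angle identity, and identify the limit by evaluating the Fourier series at the endpoints to obtain $q(0)+q(\pi)=q_0+2\sum_{k\geq 1} q_{2k}$.

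For the equality case, the paper simply invokes Theorem~\ref{th:uniqueness}, whose proof proceeds via the pointwise identity $\varphi_k''/\varphi_k - \varphi_j''/\varphi_j = \lambda_j-\lambda_k$ and an inductive comparison of cosine coefficients. Your sketch is different and potentially more direct: from Lemma~\ref{lemma:equality} you extract that $\spn\{\sin x,\ldots,\sin nx\}$ is $H$-invariant, hence $q\sin(nx)$ lies in this span. One small imprecision: speaking of ``the highest Fourier sine frequencies'' in $q\sin(nx)$ only makes literal sense when $q$ is a trigonometric polynomial. For general $q$ the correct statement is that the sine coefficient of $q\sin(nx)$ at frequency $m>n$ equals $\tfrac12(q_{m-n}-q_{m+n})$, so its vanishing for all $m>n$ gives $q_j=q_{j+2n}$ for every $j\geq 1$; since $q_j\to 0$ (by the assumed absolute convergence of \eqref{fourier}), this forces $q_j=0$ for all $j\geq 1$. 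With that adjustment your route is valid and arguably shorter than the paper's Theorem~\ref{th:uniqueness}; also note the small slip where you first apply $H$ to $\psi_1=\sin x$ but then analyse $q\sin(nx)$---it is the latter that is needed.
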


\begin{proof}
Using $\psi_k(x)=\sin(kx)$, $k=1,\ldots,n$, as test functions in \eqref{rayleighsum} we have
\begin{displaymath}
\begin{split}
\sum_{k=1}^{n}\lambda_{k} & \leq \sum_{k=1}^{n}\frac{\ds \dint_{0}^{\pi} k^2\cos^{2}(kx) +
q(x)\sin^{2}(k x)\dx{x}}{\ds \dint_{0}^{\pi} \sin^{2}(k x)\dx{x}}\\
& = \sum_{k=1}^{n}\left[ k^2 + \frac{\ds 2}{\ds \pi}\dint_{0}^{\pi} q(x)\sin^{2}(k x)\dx{x}\right].
\end{split}
\end{displaymath}
By expanding $q$ as in \eqref{fourier}, and using the identities $\sin^{2}(k x)=(1-\cos(2kx))/2$ and 
$\cos(jx) \cos(2kx) = (\cos(jx+2kx)+\cos(jx-2kx))/2$, we obtain
\begin{displaymath}
\begin{split}
\dint_{0}^{\pi} q(x)\sin^{2}(k x)\dx{x} & = \frac{\ds 1}{\ds 2}\dint_{0}^{\pi} \left[\frac{\ds q_{0}}{\ds 2} +
\sum_{j=1}^{\infty} q_{j} \cos(j x)\right]\left[1-\cos(2kx)\right]\dx{x}\\
& = \frac{\ds \pi}{\ds 4}(q_{0} - q_{2k}).
\end{split}
\end{displaymath}
Replacing this in the expression above yields
\begin{displaymath}
\sum_{k=1}^{n}\lambda_{k} \leq \sum_{k=1}^{n} \left[k^2 + \frac{\ds q_{0} - q_{2k}}{\ds 2}\right]
\end{displaymath}
as desired. If we evaluate the Fourier series for $q$ at $x=0$ and at $x=\pi$ and add the resulting sums we see that
\begin{equation}
\label{potentialvalue}
q(0) + q(\pi) = q_{0} + 2\sum_{k=1}^{\infty}q_{2k},
\end{equation}
showing that the limit of the sum on the right-hand side of inequality \eqref{finitetracedir} is given by~(\ref{tracedir}). The statement in case of equality follows from Theorem~\ref{th:uniqueness}.
\end{proof}

\begin{remark}\label{nolowerbound}
(i) It is not possible to find a \emph{lower} bound for $\sum_{k=1}^n \lambda_k$ depending only on the first $2n$ Fourier coefficients of $q$ (or indeed, the first $m$ for any fixed $m\geq 0$) and the eigenvalues $k^2$, as the following simple example shows. For arbitrary $n\geq 1$, if we let $q(x)=t\cos(2n+2)x$, where $t>0$ is taken very large, then we have $q_{2n+2}=t$, while $q_k=0$ for all other $k\geq 0$. In this case, using an argument as in the proof of Theorem~\ref{th:finitetracedir} with $\sin(x), \ldots,\sin(n-1)x$, $\sin(n+1)x$ as our $n$ test functions, we see
\begin{displaymath}
	\sum_{k=1}^n \lambda_k \leq \sum_{k=1}^{n-1}k^2 + (n+1)^2 - \frac{t}{2} \longrightarrow -\infty
\end{displaymath}
if we let $t \to \infty$, even though $q_0=\ldots=q_{2n}=0$. We could construct a similar example which also satisfies $q(0)=q(\pi)=0$ by taking, 
for example, $q(x)=t\cos(2n+2)x - t\cos(2n+4)x$. It  seems that any lower bound would have to take into account a quantity such as 
$\sup_{n\in\N}|q_{2n}|$ or $\sup_{x\in(0,\pi)}|q(x)|$, or else only be valid asymptotically (e.g.~via the inclusion of an $O(n^s)$-type error 
term). It is easy to construct analogous examples for the other problems we will consider.

(ii) The inequality \eqref{finitetracedir} is valid without convergence of the Fourier series \eqref{fourier}, provided only that the
coefficients $q_k$
given by \eqref{eq:fourier-int} are well defined.
\end{remark}

There is a direct analogue of Theorem~\ref{th:finitetracedir} for the Neumann problem.

\begin{theorem}
\label{th:finitetraceneu}
For all $n\geq 0$,
\begin{equation}
\label{finitetraceneu}
\sum_{k=0}^{n}\left(\mu_{k}-k^2-\frac{\ds q_{0}}{\ds 2}\right)\leq
\frac{\ds 1}{\ds 2}\sum_{k=1}^{n} q_{2k}.
\end{equation}
Equality for some $n\geq 0$ implies that $q$ is constant. The right-hand side of the above inequality converges to
\begin{equation*}
\label{traceneu}
-\frac{1}{2}\,\dashint_0^\pi q(x)\dx{x} + \frac{\ds q(0) + q(\pi)}{\ds 4}
\end{equation*}
as $n$ goes to infinity.
\end{theorem}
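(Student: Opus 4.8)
The plan is to run exactly the argument used for Theorem~\ref{th:finitetracedir}, replacing the Dirichlet test functions $\sin(kx)$ by the Neumann eigenfunctions $\psi_k(x)=\cos(kx)$ of the zero-potential problem, for $k=0,1,\ldots,n$. These $n+1$ functions are mutually orthogonal in $L^2(0,\pi)$ and satisfy $\mu_k(0)=k^2$, so feeding them into the summation form \eqref{sumprinciple} of the min--max principle yields $\dsum_{k=0}^n \mu_k(q)\le\dsum_{k=0}^n\mathcal{R}[q,\psi_k]$. The index $k=0$ is dealt with separately and trivially: since $\psi_0\equiv 1$ has vanishing gradient, $\mathcal{R}[q,\psi_0]=\frac{1}{\pi}\dint_0^\pi q(x)\dx{x}=q_0/2$, which already has the form $0^2+q_0/2$.

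For $k\ge 1$, I would compute, just as in the proof of Theorem~\ref{th:finitetracedir}, that $\mathcal{R}[q,\psi_k]=k^2+\frac{2}{\pi}\dint_0^\pi q(x)\cos^2(kx)\dx{x}$, using $\dint_0^\pi\sin^2(kx)\dx{x}=\dint_0^\pi\cos^2(kx)\dx{x}=\pi/2$. Expanding $q$ by \eqref{fourier} and now invoking the identity $\cos^2(kx)=(1+\cos(2kx))/2$ in place of the one for $\sin^2$, the cross terms again collapse by orthogonality of the cosine system, leaving $\dint_0^\pi q(x)\cos^2(kx)\dx{x}=\frac{\pi}{4}(q_0+q_{2k})$; the only change relative to the Dirichlet computation is the sign of the $q_{2k}$ contribution. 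Hence $\mathcal{R}[q,\psi_k]=k^2+(q_0+q_{2k})/2$ for every $k\ge 1$.

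Summing over $k=0,\ldots,n$ and collecting the $n+1$ copies of $q_0/2$ gives $\dsum_{k=0}^n\mu_k\le\dsum_{k=0}^n k^2+(n+1)\frac{q_0}{2}+\frac{1}{2}\dsum_{k=1}^n q_{2k}$, which rearranges to \eqref{finitetraceneu}. For the limit as $n\to\infty$, I would use \eqref{potentialvalue} to write $\dsum_{k=1}^\infty q_{2k}=\frac{1}{2}\big(q(0)+q(\pi)-q_0\big)$, so that $\frac{1}{2}\dsum_{k=1}^n q_{2k}$ converges to $-\frac{1}{2}\dashint_0^\pi q(x)\dx{x}+\frac{q(0)+q(\pi)}{4}$, as claimed. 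The equality statement is not proved at this point but is deferred to Theorem~\ref{th:uniqueness}; this is the only step that requires genuinely new input, and I expect it to be the one real obstacle, the rest being a routine sign-tracking variant of the proof of Theorem~\ref{th:finitetracedir}.
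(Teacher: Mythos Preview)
Your proposal is correct and is exactly the approach the paper indicates: use $\psi_k(x)=\cos(kx)$, $k=0,1,\ldots,n$, as test functions in \eqref{sumprinciple}, replace $\sin^2(kx)=(1-\cos(2kx))/2$ by $\cos^2(kx)=(1+\cos(2kx))/2$ to flip the sign of the $q_{2k}$ term, and invoke \eqref{potentialvalue} for the limit; the equality case is indeed handled separately by Theorem~\ref{th:uniqueness}. The paper in fact omits all these details and simply refers back to the proof of Theorem~\ref{th:finitetracedir}, so you have written out precisely what the authors left implicit.
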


\begin{proof}
The proof is exactly the same as for Theorem~\ref{th:finitetracedir}, except that we use $\psi_k(x)=\cos(kx)$, $k=0,1,\ldots,n$ as test 
functions. We omit the details.
\end{proof}

As in the Dirichlet case, a classical trace formula analogous to \eqref{classtr} (see e.g. \cite[Sec.~1.14]{lesa2}) implies there is equality in the
limit in \eqref{finitetraceneu} as $n \to \infty$.

By combining our separate estimates for Dirichlet and Neumann eigenvalues, we can simplify the resulting sums. This effectively corresponds 
to considering the eigenvalues of the circle; cf.~Theorem~\ref{torus}. In this case, however, the result is an immediate consequence of
Theorems~\ref{th:finitetracedir} and \ref{finitetraceneu}.

\begin{theorem}
\label{finitetracecomb}
For all $n \geq 1$,
\begin{equation}
\label{eq:finitetracecomb}
	\sum_{k=1}^n \left[\frac{\lambda_k+\mu_k}{2} - k^2 - \dashint_0^\pi q(x)\dx{x}\right]
	+\frac{1}{2}\left[\mu_0-\dashint_0^\pi q(x)\dx{x}\right] \leq 0.
\end{equation}
with equality for any $n\geq 1$ implying $q$ is constant.
\end{theorem}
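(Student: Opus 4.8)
The plan is to derive \eqref{eq:finitetracecomb} directly by adding the two preceding inequalities, \eqref{finitetracedir} and \eqref{finitetraceneu}, after noting that their right-hand sides are exactly negatives of one another. First I would write down the Dirichlet bound for the index range $k=1,\ldots,n$,
\begin{displaymath}
\sum_{k=1}^{n}\left(\lambda_{k}-k^2-\frac{q_{0}}{2}\right)\leq -\frac{1}{2}\sum_{k=1}^{n} q_{2k},
\end{displaymath}
and the Neumann bound, which I would split as $\mu_0 - q_0/2$ (the $k=0$ term, for which $\mu_0(0)=0^2=0$) plus the sum over $k=1,\ldots,n$,
\begin{displaymath}
\left(\mu_0-\frac{q_0}{2}\right)+\sum_{k=1}^{n}\left(\mu_{k}-k^2-\frac{q_{0}}{2}\right)\leq \frac{1}{2}\sum_{k=1}^{n} q_{2k}.
\end{displaymath}

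Next I would add these two inequalities. On the right-hand side the terms $\mp\frac12\sum_{k=1}^n q_{2k}$ cancel, leaving $0$. On the left-hand side, grouping the $k=1,\ldots,n$ terms gives $\sum_{k=1}^n\bigl(\lambda_k+\mu_k - 2k^2 - q_0\bigr)$, and there is the extra term $\mu_0 - q_0/2$. Dividing the whole inequality by $2$ and recalling that $q_0/2 = \dashint_0^\pi q(x)\dx{x}$ turns $\frac12(\lambda_k+\mu_k-2k^2-q_0)$ into $\frac{\lambda_k+\mu_k}{2}-k^2-\dashint_0^\pi q$ and turns $\frac12(\mu_0-q_0/2)$ into $\frac12\bigl(\mu_0-\dashint_0^\pi q\bigr)$, which is precisely \eqref{eq:finitetracecomb}.

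For the equality statement, note that equality in \eqref{eq:finitetracecomb} forces equality in both \eqref{finitetracedir} and \eqref{finitetraceneu} simultaneously (a sum of two ``$\leq$'' relations is an equality only if each is), and by the equality clauses already proved in Theorems~\ref{th:finitetracedir} and~\ref{th:finitetraceneu} this forces $q$ to be constant; conversely one checks that a constant $q$ does give equality. I do not anticipate a genuine obstacle here: the only point requiring a moment's care is the bookkeeping of the $k=0$ Neumann term and the factor of $\tfrac12$, together with the substitution $q_0/2=\dashint_0^\pi q$, so that the combined left-hand side matches the stated form exactly. Since the paper itself flags this as ``an immediate consequence'' of the two earlier theorems, the proof should be only a few lines.
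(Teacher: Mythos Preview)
Your proposal is correct and is exactly the approach the paper indicates: the paper itself does not write out a proof, stating only that the result ``is an immediate consequence of Theorems~\ref{th:finitetracedir} and~\ref{th:finitetraceneu},'' which is precisely the addition-and-cancellation argument you outline. The equality argument via simultaneous equality in both constituent inequalities is also the natural (and correct) reading of that remark.
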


\begin{remark}
\label{rem:finitetracecomb}
As in the Dirichlet and Neumann cases, the Weyl asymptotics imply that the left-hand side of \eqref{eq:finitetracecomb} converges as $n \to \infty$,
and by combining the separate trace formulae for $\lambda_k$ and $\mu_k$ we see there is again equality in the limit.
\end{remark}

This combination of boundary conditions also allows us to obtain the following ``zeta function"-type bound.

\begin{theorem}
\label{th:sumpower}
Suppose that $\mu_0>0$. Then for all $n\geq 1$ and $s>0$,
\begin{displaymath}
	\sum_{k=1}^n \lambda_k^{-s}+\sum_{k=0}^n \mu_k^{-s} \geq 2\sum_{k=1}^n 
	\left[k^2 + \dashint_0^\pi q(x)\dx{x}\right]^{-s} + \left[\dashint_0^\pi q(x)\dx{x}\right]^{-s}.
\end{displaymath}
\end{theorem}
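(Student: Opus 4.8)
The plan is to deduce the zeta-type inequality from the combined summation bound in Theorem~\ref{finitetracecomb} by a convexity argument applied termwise — or rather, by applying a single convexity inequality to the function $t \mapsto t^{-s}$ and then summing. First I would recall the elementary fact that for $s > 0$ the map $f(t) = t^{-s}$ is convex on $(0,\infty)$, so Jensen's inequality gives, for any finite collection of positive reals $a_1,\dots,a_m$ with positive weights $w_1,\dots,w_m$ summing to $1$,
\begin{equation*}
\sum_{i} w_i\, a_i^{-s} \;\geq\; \Bigl(\sum_i w_i\, a_i\Bigr)^{-s}.
\end{equation*}
I would apply this pairwise, with $a_1 = \lambda_k$, $a_2 = \mu_k$ and $w_1 = w_2 = 1/2$, for each $k = 1,\dots,n$, obtaining
\begin{equation*}
\tfrac12\lambda_k^{-s} + \tfrac12\mu_k^{-s} \;\geq\; \left(\frac{\lambda_k+\mu_k}{2}\right)^{-s},
\end{equation*}
and separately note the trivial statement $\mu_0^{-s} \geq \mu_0^{-s}$. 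Summing these over $k$ and multiplying by $2$ reduces the claim to showing
\begin{equation*}
2\sum_{k=1}^n \left(\frac{\lambda_k+\mu_k}{2}\right)^{-s} + \mu_0^{-s}
\;\geq\; 2\sum_{k=1}^n\left[k^2+\dashint_0^\pi q\right]^{-s} + \left[\dashint_0^\pi q\right]^{-s}.
\end{equation*}

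Next I would invoke Theorem~\ref{finitetracecomb}, which states precisely that
\begin{equation*}
\sum_{k=1}^n\left[\frac{\lambda_k+\mu_k}{2} - k^2 - \dashint_0^\pi q\right] + \frac12\left[\mu_0 - \dashint_0^\pi q\right] \leq 0.
\end{equation*}
Writing $b_k := \tfrac12(\lambda_k+\mu_k)$ for $k \geq 1$, $b_0 := \mu_0$, and $c_k := k^2 + \dashint_0^\pi q$ for $k \geq 1$, $c_0 := \dashint_0^\pi q$, with the weight $w_0 = 1/2$ on the index $0$ and $w_k = 1$ for $k\geq 1$, the content of Theorem~\ref{finitetracecomb} is exactly $\sum_k w_k b_k \leq \sum_k w_k c_k$. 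What I want is $\sum_k w_k b_k^{-s} \geq \sum_k w_k c_k^{-s}$. This is where the decreasing and convex nature of $f(t)=t^{-s}$ must be exploited jointly: I would use the standard "superadditivity of the decreasing rearrangement under majorization"-type argument, or more simply the tangent line trick. Concretely, since $f$ is convex, for each index $k$ we have $f(c_k) \leq f(b_k) + f'(b_k)(c_k - b_k)$; but $f' < 0$ and the signs of $c_k - b_k$ are not controlled individually, so this per-index bound does not immediately sum to the right thing.

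The main obstacle, then, is precisely this: passing from an inequality between the \emph{weighted sums} $\sum w_k b_k$ and $\sum w_k c_k$ to the reversed inequality between $\sum w_k f(b_k)$ and $\sum w_k f(c_k)$ is \emph{not} valid for a general convex decreasing $f$ without some additional structure — e.g., it would follow if $(c_k)$ majorizes $(b_k)$, but here we only have a one-sided comparison of the sums. I expect the resolution is to use the specific fact that $c_k = k^2 + \dashint_0^\pi q$ is, up to the additive constant, the \emph{zero-potential} data, together with the individual Weyl-type lower bounds on $\lambda_k$ and $\mu_k$ that are hidden in the proof of Theorem~\ref{finitetracecomb}. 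Rather than the combined bound as a black box, I would go back to the test-function inequalities behind Theorems~\ref{th:finitetracedir} and~\ref{th:finitetraceneu}, which actually yield the stronger per-term statement
\begin{equation*}
\frac{\lambda_k+\mu_k}{2} \leq k^2 + \dashint_0^\pi q - \tfrac14 q_{2k} + \tfrac14 q_{2k} \cdot(\text{sign bookkeeping}) \quad\Longrightarrow\quad \text{a clean per-}k\text{ comparison},
\end{equation*}
and in fact the cleanest route is to observe that the \emph{sum} $\tfrac12(\lambda_k + \mu_k)$ does \emph{not} admit a per-$k$ bound, which is exactly why Theorem~\ref{finitetracecomb} is only a bound on the running sum — so the honest proof must instead run an Abel summation / telescoping argument: apply the convexity inequality $f(c_k) - f(b_k) \geq f'(c_k)(c_k - b_k)$ with $f'(c_k) = -s\,c_k^{-s-1}$ \emph{increasing} in $k$ (since $c_k \uparrow$ and $-c^{-s-1}\uparrow$), then use summation by parts together with the partial-sum inequality $\sum_{k\le m} w_k(b_k - c_k) \leq 0$ for \emph{every} $m$ — which does hold, since it is exactly the statement of Theorem~\ref{finitetracecomb} applied at each $n=m$ (monotonicity of the left-hand side in $n$ is not needed; the inequality holds for all $n$). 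Thus the real engine is: (a) convexity to linearize $f(b_k)-f(c_k)$, (b) monotonicity of $f'(c_k)$ in $k$, (c) Abel summation against the family of partial-sum inequalities from Theorem~\ref{finitetracecomb}. The hypothesis $\mu_0 > 0$ is needed only to ensure all the $b_k, c_k$ lie in the domain $(0,\infty)$ where $f$ is defined and convex. I would write up steps (a)–(c) carefully, as the sign tracking in the Abel summation is the one genuinely delicate point.
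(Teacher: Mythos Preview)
Your plan is correct, and the final engine you isolate---tangent-line inequality for the convex map $t\mapsto t^{-s}$ at the points $c_k=k^2+q_0/2$, followed by Abel summation against the full family of partial-sum inequalities $\sum_{j\le m} w_j(b_j-c_j)\le 0$ coming from Theorem~\ref{finitetracecomb} (and from Theorem~\ref{th:finitetraceneu} with $n=0$ for the $m=0$ case)---does yield the desired bound. The only things to make explicit when you write it up are that $\mu_0>0$ together with $\mu_0\le q_0/2$ forces $c_0=q_0/2>0$, so all $c_k$ lie in $(0,\infty)$; that $\lambda_k\ge\mu_k\ge\mu_0>0$ so the Jensen step is legitimate; and that the Abel identity $\sum_{k=0}^n \phi_k\,w_k(b_k-c_k)=\phi_n S_n-\sum_{k=0}^{n-1}S_k(\phi_{k+1}-\phi_k)$ with $\phi_k=f'(c_k)<0$ increasing and $S_m\le 0$ gives every term the right sign.

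This is a genuinely different route from the paper's. The paper does \emph{not} pair $\lambda_k$ with $\mu_k$ via Jensen; instead it invokes the layer-cake identity $\lambda^{-s}=s(s+1)\int_0^\infty \alpha^{-s-2}[\alpha-\lambda]_+\,d\alpha$, applies it to every $\lambda_k$ and $\mu_k$ separately, and then for each fixed level $\alpha$ uses Theorem~\ref{finitetracecomb} (at the $\alpha$-dependent cutoff $m(\alpha)$) to compare the integrand with the corresponding expression built from $k^2+q_0/2$. Both arguments feed on exactly the same structural input---the validity of the combined summation bound for \emph{every} $m$, not just the terminal $n$---so they are morally the same majorization-type mechanism. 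What your approach buys is an entirely elementary proof with no integral representation, at the cost of the preliminary Jensen reduction (which throws away a little information by replacing $\lambda_k^{-s}+\mu_k^{-s}$ with $2((\lambda_k+\mu_k)/2)^{-s}$). What the paper's approach buys is that it keeps the Dirichlet and Neumann eigenvalues separate throughout and fits into the general template of Theorem~\ref{th:zeta}, making the analogy with the other power bounds in the paper transparent.
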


If $s>1/2$, then both sides of the above inequality converge as $n \to \infty$. The proof 
is a variant of that of Theorem~\ref{th:zeta} and is therefore delayed until Section~\ref{sec:zeta}.

\section{An application to a particular class of potentials on the interval}
\label{sec:app}

Here we give an application, or special case, of Theorems~\ref{th:finitetracedir} and \ref{th:finitetraceneu}. As in Section~\ref{sec:finite}, we denote by
$\lambda_k$ and $\mu_k$ the ordered Dirichlet and Neumann eigenvalues associated with $q$, respectively.

\begin{theorem}
\label{th:app}
Suppose that $q$ admits the expansion \eqref{fourier} and is absolutely continuous on $(0,\pi)$.
\begin{itemize}
\item[(i)] If $q'(x) \leq q'(\pi-x)$ a.e. on $(0,\frac{\pi}{2})$, then for all $n \geq 1$,
\begin{displaymath}
	\sum_{k=1}^n \left(\lambda_k- k^2 - \dashint_0^\pi q(x) \dx{x}\right) \leq 0.
\end{displaymath}
\item[(ii)] If $q'(x) \geq q'(\pi-x)$ a.e. on $(0,\frac{\pi}{2})$, then for all $n \geq 0$,
\begin{displaymath}
	\sum_{k=0}^n \left(\mu_k - k^2 - \dashint_0^\pi q(x) \dx{x}\right) \leq 0.
\end{displaymath}
\item[(iii)] Under the assumptions of (i), if in addition $\int_0^\pi q(x) \dx{x} \leq 0$, then for all $s>1/2$ we also have
\begin{displaymath}
	\sum_{k=1}^n \lambda_k^{-s} \geq \sum_{k=1}^n k^{-2s}
\end{displaymath}
for all $n \geq 1$, and $\zeta_q(s) \geq \zeta(2s)$.
\end{itemize}
Equality in any of the above finite inequalities implies that $q$ is constant on $(0,\pi)$.
\end{theorem}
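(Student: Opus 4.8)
The plan is to deduce all three parts from the already-established Theorems~\ref{th:finitetracedir} and~\ref{th:finitetraceneu} by showing that, under the hypotheses on $q'$, the partial sums $\sum_{k=1}^n q_{2k}$ of the even-indexed Fourier coefficients have a definite sign. The key observation is that, up to a rescaling, the $q_{2k}$ are the \emph{ordinary} cosine Fourier coefficients of the function obtained by folding $q$ about the midpoint $x=\pi/2$, and that this folded function is monotone precisely under the hypothesis of (i) (non-increasing) or of (ii) (non-decreasing).

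Concretely, I would first write $q=q_e+q_o$ with $q_e(x)=\tfrac12\bigl(q(x)+q(\pi-x)\bigr)$ and $q_o(x)=\tfrac12\bigl(q(x)-q(\pi-x)\bigr)$ the parts even and odd about $\pi/2$. Since $\cos(2kx)$ is invariant under $x\mapsto\pi-x$, only $q_e$ contributes to $q_{2k}$; folding the integral onto $(0,\tfrac{\pi}{2})$ and substituting $y=2x$ then gives $q_{2k}=\tfrac{2}{\pi}\dint_0^\pi \tilde q(y)\cos(ky)\dx{y}$, where $\tilde q(y):=q_e(y/2)$ on $(0,\pi)$, i.e.\ $q_{2k}$ is exactly the $k^{\rm th}$ cosine coefficient of $\tilde q$. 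Since $q_e'(x)=\tfrac12\bigl(q'(x)-q'(\pi-x)\bigr)$, the hypothesis of (i) says precisely that $\tilde q$ is non-increasing on $(0,\pi)$ and that of (ii) that $\tilde q$ is non-decreasing; absolute continuity of $q$ ensures $\tilde q$ is absolutely continuous, so these derivative conditions are meaningful.

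Next, writing $\sum_{k=1}^n q_{2k}=\tfrac{2}{\pi}\dint_0^\pi \tilde q(y)D_n(y)\dx{y}$ with $D_n(y)=\sum_{k=1}^n\cos(ky)$ and integrating by parts, the primitive $\mathcal D_n(y)=\sum_{k=1}^n\frac{\sin(ky)}{k}$ vanishes at both endpoints $y=0$ and $y=\pi$, so the boundary term drops and $\sum_{k=1}^n q_{2k}=-\tfrac{2}{\pi}\dint_0^\pi \tilde q'(y)\,\mathcal D_n(y)\dx{y}$. By the Fej\'er--Jackson inequality $\mathcal D_n>0$ on $(0,\pi)$ for every $n\geq1$, so $\sum_{k=1}^n q_{2k}\geq0$ in case (i) and $\leq0$ in case (ii). Inserting these into \eqref{finitetracedir} and \eqref{finitetraceneu} and recalling $q_0/2=\dashint_0^\pi q(x)\dx{x}$ yields the bounds in (i) and (ii) verbatim. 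For the equality statements, once the right-hand side of \eqref{finitetracedir} (resp.\ \eqref{finitetraceneu}) is known to be non-positive, vanishing of the corresponding left-hand side forces equality in that inequality, whence $q$ is constant by the equality statement of Theorem~\ref{th:finitetracedir} (resp.\ Theorem~\ref{th:finitetraceneu}).

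For (iii), summing the inequality of (i) over $k=1,\dots,m$ and using $\dint_0^\pi q\leq0$ gives $\sum_{k=1}^m\lambda_k\leq\sum_{k=1}^m k^2$ for every $m\geq1$, with both $(\lambda_k)$ and $(k^2)$ non-decreasing; the reversed inequality $\sum_{k=1}^n\lambda_k^{-s}\geq\sum_{k=1}^n k^{-2s}$, and the limiting statement $\zeta_q(s)\geq\zeta(2s)$ for $s>1/2$ (convergence coming from the Weyl asymptotics), then follow from the general majorization/convexity result of Theorem~\ref{th:zeta} applied to the convex decreasing function $x\mapsto x^{-s}$ on $(0,\infty)$, the one delicate point being positivity of the $\lambda_k$, which is among the hypotheses of that theorem. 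The step I expect to be the main obstacle is the reduction in the second paragraph --- getting the folding and rescaling right so that the $q_{2k}$ genuinely become cosine coefficients of a function whose monotonicity is governed exactly by the stated conditions on $q'$; once that dictionary is in place, the Fej\'er--Jackson inequality supplies the positivity that does the real work, and the remainder is bookkeeping with Theorems~\ref{th:finitetracedir}, \ref{th:finitetraceneu} and~\ref{th:zeta}.
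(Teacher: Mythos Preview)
Your approach is correct and essentially the same as the paper's: both reduce (i) and (ii) to showing that $\sum_{k=1}^n q_{2k}$ has a definite sign via folding about $\pi/2$, integrating by parts, and invoking the positivity of a trigonometric partial sum, and both obtain (iii) directly from Theorem~\ref{th:zeta} with $a_k=b_k=k^2$. The only cosmetic difference is that you substitute $y=2x$ and cite the Fej\'er--Jackson inequality $\sum_{k=1}^n \sin(ky)/k>0$ on $(0,\pi)$, whereas the paper works on $(0,\pi/2)$ and uses the equivalent statement (from~\cite{laug}) that $\sum_{k=1}^n \cos^2(kx)/k^2$ is decreasing there---these are the same fact, since $\frac{d}{dx}\bigl(\cos^2(kx)/k^2\bigr)=-\sin(2kx)/k$.
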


\begin{remark}
The assumptions of the Dirichlet case (i) are always satisfied by convex potentials, i.e.~potentials $q$ for which $q''(x) \geq 0$ a.e.~in $(0,\pi)$, while concave potentials, i.e.~with $q''(x) \leq 0$ a.e., always satisfy the Neumann condition~(ii).
\end{remark}

\begin{proof}[Proof of Theorem~\ref{th:app}]
(i) Fix $n \geq 1$. Recalling that $\dashint_0^\pi q(x)\dx{x} = q_0/2$ and using Theorem~\ref{th:finitetracedir}, we only need to show that 
the right-hand side of \eqref{finitetracedir} is non-positive. Recalling the definition of $q_{2k}$ and integrating by parts,
\begin{displaymath}
\begin{split}
	q_{2k} = \frac{2}{\pi} \int_0^\pi q(x) \cos(2kx)\dx{x} &= \frac{2}{\pi} \int_0^\frac{\pi}{2} 
	\left[q(x)+q(\pi-x)\right] \cos(2kx)\dx{x}\\
	& = -\frac{1}{\pi k}\int_0^\frac{\pi}{2} \left[q'(x)-q'(\pi-x)\right] \sin(2kx)\dx{x}.
\end{split}
\end{displaymath}
Summing over $k$ and noting that $\frac{d}{dx} \cos^2(kx) /k = -\sin(2kx)$, this means
\begin{displaymath}
	-\frac{1}{2}\sum_{k=1}^n q_{2k} = -\frac{1}{2\pi}\int_0^\frac{\pi}{2} \left[q'(x)-q'(\pi-x)\right] \frac{d}{dx}\left(
	\sum_{k=1}^n \frac{\cos^2(kx)}{k^2}\right)\dx{x}.
\end{displaymath}
It is known that $\sum_{k=1}^n \cos^2(kx)/k^2$ is decreasing on $(0,\pi/2)$ for every $n \geq 1$ (cf. \cite[pp.~322--3]{laug}). Our assumptions on
$q$ 
therefore imply that the above integrand is positive for almost all $x \in (0,\pi/2)$, and thus
\begin{displaymath}
	\sum_{k=1}^n \left(\lambda_k - k^2 - \frac{q_0}{2}\right) \leq -\frac{1}{2} \sum_{k=1}^n q_{2k} \leq 0.
\end{displaymath}
Equality for some $n \geq 1$ means that
\begin{displaymath}
	0 = \sum_{k=1}^n \left(\lambda_k - k^2 -\frac{q_0}{2}\right) \leq - \frac{1}{2} \sum_{k=1}^n q_{2k} \leq 0,
\end{displaymath}
so that every inequality is an equality. In this case there is also equality in Theorem~\ref{th:finitetracedir}, and so $q$ is constant.

(ii) Applying Theorem~\ref{th:finitetraceneu} in place of Theorem~\ref{th:finitetracedir}, we obtain
\begin{displaymath}
\begin{split}
	\sum_{k=0}^n \left(\mu_k - k^2 - \frac{q_0}{2}\right) &\leq \frac{1}{2}\sum_{k=1}^n q_{2k} \\
	&\leq\frac{1}{2\pi}\int_0^\frac{\pi}{2} \left[q'(x)+q'(\pi-x)\right]\frac{d}{dx}\left(\sum_{k=1}^n\frac{\cos^2(kx)}{k^2}\right)\dx{x}.
\end{split}
\end{displaymath}
This time the integrand is negative almost everywhere. The case of equality follows in the same way as in (i).

(iii) This follows from Theorem~\ref{th:zeta} with $a_k = b_k = k^2$.
\end{proof}

\section{Summation bounds for the circle and flat torus}
\label{sec:torus}

In the case of $N$-dimensional flat tori we can obtain an especially simple bound which may be seen as the natural multi-dimensional generalization of Theorem~\ref{finitetracecomb}. By writing the zero-potential eigenfunctions as complex exponentials instead of sines and cosines, as is more natural on a torus, the proof becomes essentially trivial.

In order to proceed, we shall need some notation. We will denote by $\mathbb{T}$ an $N$-dimensional flat torus, $N\geq 1$, spanned by linearly independent vectors $v_1,\ldots,v_N \in \R^N$. 
That is, if we define a lattice $\Gamma \subset \R^N$ as
\begin{displaymath}
\Gamma = \{n_1 v_1+\ldots+n_N v_N: n_i \in \mathbb{Z}, \,i=1\,\ldots,N\}.
\end{displaymath}
and an action of $\Gamma$ on $\R^N$ by $\gamma(x):=\gamma+x$, $\gamma\in\Gamma$, $x\in\R^N$, then our torus is given by 
$\mathbb{T} = \R^N / \Gamma$. If $N=1$ then of course $\mathbb{T}$ is the circle.

We define the vectors $w_1,\ldots,w_N \in \R^N$ by $(w_j,v_k) = \delta_{jk}$, the Kronecker delta, where $( \,.\, , \,.\, )$ is the usual inner 
product on $\R^N$. Denote by $W$ the matrix whose $j^{\rm th}$ row is given by the vector $w_j$. Then for each $\alpha =
(\alpha_1,\ldots,\alpha_N) \in \Z^N$, we may define an eigenfunction $\psi_\alpha$ of the zero-potential problem \eqref{lapln} with $q=0$ on the
manifold without boundary $\mathbb{T}$ by
\begin{displaymath}
	\psi_\alpha(x):= e^{2\pi i \alpha^{\textrm T} W x};
\end{displaymath}
if we denote the $(m,n)^{\rm th}$ entry of $W$ by $w_{mn}$, then the associated eigenvalue $\lambda_\alpha = \lambda_\alpha(0)$  is given by
\begin{displaymath}
	\lambda_\alpha = 4\pi^2 \sum_{n=1}^N \left(\sum_{m=1}^N \alpha_m w_{mn}\right)^2.
\end{displaymath}
In the one-dimensional case, we have just one vector $v=2\pi a \in \R$ (without loss of generality $v\geq 0$), in this case just one vector 
$w =v^{-1} > 0$, and the eigenfunctions become $4\psi_n = e^{inx/a}$, with $\lambda_n = (n/a)^2$, $n \in \Z$.

Ordering the eigenvalues $\lambda_\alpha$ as an increasing sequence $\{\lambda_k(0)\}_{k\in\Z}$ with $\lambda_0(0)=0$ (corresponding to $\alpha = 0$), for each $k\geq 1$ there exists $\alpha=\alpha(k)$ for which $\lambda_k(0) = \lambda_\alpha$. The exact relationship between $\alpha$ and $k$ depends on the $v_j$, and to the best of our knowledge there is no known explicit formula for this for arbitrary $v_j$. We will order the eigenvalues $\lambda_k(q)$ of \eqref{lapln} with potential $q$ on $\mathbb{T}$ similarly.

\begin{theorem}
\label{torus}
For any integrable $q$ and for all $n \geq 0$,
\begin{equation}
\label{finitetracetor}
	\sum_{k=0}^n \left[\lambda_k(q)-\lambda_k\left(\dashint_{\mathbb{T}}q(x)\dx{x}\right) \right]\leq 0.
\end{equation}
\end{theorem}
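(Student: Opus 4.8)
The plan is to mimic the proof of Theorem~\ref{th:finitetracedir}, but now exploiting the fact that on the torus $\mathbb{T}$ the zero-potential eigenfunctions are the complex exponentials $\psi_\alpha$, which form an orthogonal basis of $L^2(\mathbb{T})$ and have constant modulus $|\psi_\alpha|^2 \equiv 1$. This last point is the crucial simplification: when we test the Rayleigh quotient \eqref{rayleighsum} against $\psi_\alpha$, the potential term contributes $\int_{\mathbb{T}} q(x) |\psi_\alpha(x)|^2 \dx{x} = \int_{\mathbb{T}} q(x) \dx{x}$, which is independent of $\alpha$ and equals $|\mathbb{T}| \cdot \dashint_{\mathbb{T}} q$.

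First I would fix $n \geq 0$ and take as test functions the $n+1$ exponentials $\psi_{\alpha(0)}, \psi_{\alpha(1)}, \ldots, \psi_{\alpha(n)}$ corresponding to the $n+1$ smallest zero-potential eigenvalues $\lambda_0(0) = 0, \lambda_1(0), \ldots, \lambda_n(0)$. Since distinct $\alpha$ give $L^2$-orthogonal eigenfunctions, the summation form of the min-max principle \eqref{sumprinciple} applies (it is stated there for real test functions, but the complex case follows by splitting into real and imaginary parts, or is covered directly by the reference \cite{band}; alternatively one can note that real and imaginary parts of $\psi_\alpha$ together with $\psi_{-\alpha}$ span the relevant eigenspaces, and pass to a real orthogonal selection). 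Then
\begin{displaymath}
	\sum_{k=0}^n \lambda_k(q) \leq \sum_{k=0}^n \mathcal{R}[q,\psi_{\alpha(k)}]
	= \sum_{k=0}^n \left[\lambda_k(0) + \dashint_{\mathbb{T}} q(x)\dx{x}\right],
\end{displaymath}
using that $\int_{\mathbb{T}}|\nabla \psi_{\alpha(k)}|^2 / \int_{\mathbb{T}}|\psi_{\alpha(k)}|^2 = \lambda_k(0)$ and the modulus-one observation above.

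It remains only to identify the right-hand side with $\sum_{k=0}^n \lambda_k(\dashint_{\mathbb{T}} q)$. This is immediate: adding a constant $c := \dashint_{\mathbb{T}} q$ to the zero potential simply shifts every eigenvalue by $c$, so $\lambda_k(c) = \lambda_k(0) + c$ for all $k$, and the ordering of eigenvalues is preserved. Hence $\sum_{k=0}^n [\lambda_k(0) + \dashint_{\mathbb{T}} q] = \sum_{k=0}^n \lambda_k(\dashint_{\mathbb{T}} q)$, which gives \eqref{finitetracetor}. I do not expect any real obstacle here — the proof is essentially a one-line test-function computation once the constant-modulus property of the exponentials is noted; the only point requiring a word of care is the use of complex-valued test functions in \eqref{sumprinciple}, which is handled by the standard reduction to real eigenfunctions spanning the same low-lying eigenspaces of the Laplacian on $\mathbb{T}$.
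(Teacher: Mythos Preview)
Your proposal is correct and follows essentially the same approach as the paper: use the complex exponentials $\psi_\alpha$ as test functions, exploit $|\psi_\alpha|\equiv 1$ so that the potential term reduces to the average of $q$, and apply \eqref{sumprinciple}. The paper's version is terser (it does not spell out the identification $\lambda_k(c)=\lambda_k(0)+c$ or the real/complex test-function point), but the argument is the same.
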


\begin{proof}
We use the $\psi_\alpha$ as test functions in the Rayleigh quotient \eqref{rayleighsum}: for any $\alpha \in \Z^N$, we have
\begin{displaymath}
	\mathcal{R}[q,\psi_\alpha] = \lambda_\alpha + \frac{\dint_{\mathbb{T}} q(x) |\psi_\alpha(x)|^2\dx{x}}{\dint_{\mathbb{T}} |\psi_\alpha(x)|^2\dx{x}} =\lambda_\alpha+\dashint_{\mathbb{T}}q(x) \dx{x},
\end{displaymath}
since obviously $|\psi_\alpha(x)|=1$ for all $x \in \mathbb{T}$ and all $\alpha\in \Z^N$. Choosing the first $n$ of these functions and using the principle \eqref{sumprinciple} gives us the inequality.
\end{proof}

\begin{theorem}
\label{th:toruspower}
Suppose that $\lambda_0(q)>0$. Then for all $n\geq 1$ and $s>0$,
\begin{displaymath}
	\sum_{k=0}^n \lambda_k^{-s}(q) \geq\sum_{k=0}^n\left[\lambda_k\left(
	\dashint_{\mathbb{T}}q(x)\dx{x}\right)\right]^{-s}.
\end{displaymath}
\end{theorem}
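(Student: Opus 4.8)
The plan is to derive this zeta-function-type inequality from the linear bound in Theorem~\ref{torus} by the same convexity-of-reciprocal-power argument that underlies Theorems~\ref{th:sumpower} and~\ref{th:zeta}; indeed the statement says precisely that the proof is a variant of the proof of Theorem~\ref{th:zeta}, which is deferred to Section~\ref{sec:zeta}. The key observation is that Theorem~\ref{torus} can be read as a majorization-type statement: writing $a_k := \lambda_k(q)$ and $b_k := \lambda_k\!\left(\dashint_{\mathbb{T}}q\,\dx{x}\right) = \lambda_k(0) + \dashint_{\mathbb{T}}q\,\dx{x}$, inequality~\eqref{finitetracetor} says $\sum_{k=0}^n a_k \leq \sum_{k=0}^n b_k$ for every $n\geq 0$. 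Together with the hypothesis $\lambda_0(q)>0$ (which forces all $a_k>0$, and, since $\dashint q \geq \lambda_0(q) - \lambda_0(0) = \lambda_0(q) > 0$ here as $\lambda_0(0)=0$, also all $b_k>0$), this is exactly the hypothesis of the elementary Hardy--Littlewood--P\'olya-style lemma that I expect Theorem~\ref{th:zeta} to be built on.

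First I would invoke that lemma in the following form: if $(a_k)_{k=0}^n$ and $(b_k)_{k=0}^n$ are positive reals with $\sum_{k=0}^m a_k \leq \sum_{k=0}^m b_k$ for every $m = 0,1,\ldots,n$, and if $(b_k)$ is nondecreasing (which it is, since $\lambda_k(0)$ is nondecreasing in $k$), then $\sum_{k=0}^n f(a_k) \geq \sum_{k=0}^n f(b_k)$ for every convex decreasing $f$ on $(0,\infty)$. Applying this with $f(t) = t^{-s}$, which is convex and strictly decreasing on $(0,\infty)$ for every $s>0$, yields
\begin{displaymath}
	\sum_{k=0}^n \lambda_k^{-s}(q) \geq \sum_{k=0}^n \left[\lambda_k\!\left(\dashint_{\mathbb{T}}q(x)\dx{x}\right)\right]^{-s},
\end{displaymath}
which is the claim. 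The remark about convergence for $s>1/2$ then follows because the right-hand side is (up to finitely many terms and a shift) the spectral zeta function of the flat Laplacian on $\mathbb{T}$, which converges precisely when $s > N/2 \geq 1/2$; and since the left-hand partial sums are bounded below by these convergent partial sums and, by the same monotonicity argument applied in the other direction using the trivial partial-sum inequality upward, are also bounded above, they converge as well.

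The main obstacle is essentially bookkeeping rather than mathematics: I must make sure the monotonicity hypothesis on $(b_k)$ is genuinely available (it is, since adding the constant $\dashint q$ preserves the ordering of the $\lambda_k(0)$), and that the positivity hypothesis $\lambda_0(q)>0$ does the double duty of keeping both sequences in the domain of $f$. One subtlety worth flagging is that the lemma is usually stated for the summation-by-parts / Abel rearrangement argument, which requires $f$ to be convex \emph{and} the telescoped differences to have a definite sign; here the decreasing hypothesis on $f$ is exactly what makes the boundary term in Abel summation go the right way, so I would be careful to track that sign. Since all of this machinery is exactly what Section~\ref{sec:zeta} sets up for Theorem~\ref{th:zeta}, in the final write-up I would simply state ``the proof is identical to that of Theorem~\ref{th:zeta}, with Theorem~\ref{torus} in place of the relevant finite bound and $a_k = b_k$ replaced by the appropriate eigenvalue sequences,'' and give only the one-line identification of $a_k$, $b_k$ above.
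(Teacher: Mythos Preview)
Your proposal is correct and follows essentially the paper's approach: both reduce the claim to Theorem~\ref{th:zeta} (with the choice $a_k=b_k$ there), after using Theorem~\ref{torus} to supply the partial-sum hypothesis and to check $\dashint_{\mathbb{T}}q \geq \lambda_0(q)>0$ so that both sequences are positive. Just note that your labels $a_k,b_k$ are swapped relative to the roles in the paper's Theorem~\ref{th:zeta}, and that the convergence discussion you append is not part of the theorem as stated.
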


\begin{proof}
Since $0<\lambda_0(q)\leq \dashint_{\mathbb{T}}q(x)\dx{x}$ by Theorem~\ref{torus}, we may apply Theorem~\ref{th:zeta}, from which the conclusion 
follows immediately.
\end{proof}

\section{On the associated trace formulae}
\label{sec:trace}

We have already observed that there is equality in the limit as $n \to \infty$ in Theorems~\ref{th:finitetracedir},~\ref{th:finitetraceneu} and~\ref{finitetracecomb}, since we have convergence to the classical trace formulae of Gelfand--Levitan type. What is interesting is that the theorems from Section~\ref{sec:finite} allow us to
obtain a new (part of a) proof of the trace formulae. Our starting point is a paper by Diki\u{\i}~\cite{dikii}, who gave an alternative proof of the trace formula which is, in some sense, more natural that that in~\cite{gele}, which was based on a study of the asymptotics of the associated Green's functions.

This proof involves a two-part argument, which in our notation is as follows. If we denote the ordered eigenfunctions associated with the zero potential by $\psi_k$ and those associated with $q$ by $\varphi_k$, and assuming without loss of generality that the mean value $q_0$ of $q$ is zero, Diki\u{\i} proved using trigonometric identities, integration by parts and a manipulation of the resulting sums that
\begin{equation}
\label{eq:dikii1}
\sum_{k=1}^n \left(\mathcal{R}[q,\psi_k] - k^2\right) \longrightarrow -\frac{q(0)+q(4)}{2}
\end{equation}
as $n \to \infty$. The second part of the  proof consists in using eigenvalue and eigenfunction asymptotics to show that
\begin{equation}
\label{eq:dikii2}
\lim_{n\to\infty} \sum_{k=1}^n \left(\mathcal{R}[q,\varphi_k]-\mathcal{R}[q,\psi_k]\right)=0.
\end{equation}
If we denote by $H = -\Delta +q$ the Schr\"odinger operator on $L^2(0,\pi)$ associated with $q$ (and the Dirichlet boundary condition), then we may 
formally rewrite \eqref{eq:dikii2} as
\begin{equation}
\label{eq:basisform}
\lim_{n \to \infty} \sum_{k=1}^n \left(\langle \varphi_k,H\varphi_k\rangle - \langle \psi_k,H\psi_k\rangle\right) = 0.
\end{equation}
In some sense we can think of \eqref{eq:basisform} as asserting that the 
action of $H$ is invariant with respect to a ``change of basis'' from $\{\varphi_k\}$ to $\{\psi_k\}$; we note that the arguments in \cite{dikii} were
phrased in these terms, and did not involve the use of Rayleigh quotients. Indeed, for $k,m \geq 1$,
we shall write $a^k_m = \langle \psi_k, \varphi_m \rangle$, so that $\psi_k = \sum_{m=1}^\infty a^k_m \varphi_m$ and $\varphi_m = \sum_{k=1}^\infty a^k_m \psi_k$, with $\sum_{k=1}^\infty (a^k_m)^2 = \sum_{m=1}^\infty (a^k_m)^2 = 1$ under the normalization
$\|\psi_k\|_2 = \|\varphi_m\|_2 = 1$. Then the sum \eqref{eq:dikii2} is equivalent to
\begin{equation}
\label{eq:dikiiestsum}
\begin{split}
\sum_{k=1}^\infty \sum_{m=1}^\infty & \lambda_k(a^k_m)^2 - \sum_{k=1}^n \sum_{m=1}^\infty \lambda_m (a^k_m)^2
=\\ &\sum_{k=1}^n \sum_{m=n+1}^\infty \lambda_k \left((a^m_k)^2 - (a^k_m)^2\right)
+ \sum_{k=1}^n \sum_{m=n+1}^\infty (\lambda_k - \lambda_m)(a^k_m)^2.
\end{split}
\end{equation}
Diki\u{\i} showed that for the problem on the interval the two sums on the right-hand side of \eqref{eq:dikiiestsum} tend to zero as $n \to \infty$ by using the basic asymptotic estimates $\lambda_k = k^2 + O(1)$ and $\varphi_k = \psi_k + O(k^{-1})$, the latter holding uniformly in $x \in (0,\pi)$ (see, e.g., \cite{lesa2}).

The proof of Theorem~\ref{th:finitetracedir}, whose conclusion says exactly that $\sum_{k=1}^n (\mathcal{R}[q,\varphi_k]-\mathcal{R}[q,\psi_k]) \leq 0$ 
for all $n \geq 1$, also gives an alternative proof of \eqref{eq:dikii1} along the way.
In a sense this is 
more natural than the proof in~\cite{dikii}, at least when taken together with \eqref{eq:dikii2}, since it computes the finite sum
$\sum_{k=1}^n \mathcal{R} [q,\psi_k]$ explicitly in terms of $q$.

\begin{remark}
It would be interesting to know if such arguments might also work in higher dimensions, where there are next to no known trace formulae. We will
not explore this here, but as an example we remark that equality in the limit as $n \to \infty$ in Theorem~\ref{torus} is equivalent
to~\eqref{eq:dikii2} holding on the torus. The difficulty in using Diki\u{\i}'s idea in higher dimensions is that the asymptotic behaviour of
the $\lambda_k$ and $\varphi_k$ changes; for example, in two dimensions, we now have $\lambda_k \sim k$, not $k^2$. This makes it harder to
obtain effective bounds on the sums in \eqref{eq:dikiiestsum}.
\end{remark}

\section{Generalization to power bounds and zeta functions}
\label{sec:zeta}

Here we will prove a theorem from which the power bound generalizations stated in earlier sections, such as Theorem~\ref{th:app} (iii), will follow
immediately; it was inspired by, and based upon, a similar result and argument in \cite[Sec.~4]{laug}. We also give the very similar proof of Theorem~\ref{th:sumpower}.

\begin{theorem}
\label{th:zeta}
Suppose the sequences $(\lambda_k)_{k\geq 1}$ and $(a_k)_{k\geq 1}$ are positive and that $(a_k)_{k\geq 1}$ is non-decreasing in $k\geq 1$. Suppose also that the sequence $(b_k)_{k\geq 1}$ satisfies $\sum_{k=1}^m \lambda_k \leq \sum_{k=1}^m b_k$ for all $m\geq 1$. Then for all $s>0$ and all $n\geq 1$ we have
\begin{equation}
\label{zeta}
\sum_{k=1}^n (\lambda_k)^{-s} \geq \sum_{k=1}^n \left( (s+1)(a_k)^{-s}-s(a_k)^{-s-1}b_k\right).
\end{equation}
If the sequence $(b_k)_{k\geq 1}$ is itself positive and non-decreasing in $k\geq 1$, then the right-hand side of \eqref{zeta} is maximized when $a_k=b_k$ for all $1\leq k\leq n$.
\end{theorem}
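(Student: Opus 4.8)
The plan is to combine a pointwise convexity (tangent line) estimate for the map $t\mapsto t^{-s}$ with a summation-by-parts argument that exploits the monotonicity of $(a_k)_{k\geq 1}$ together with the partial-sum hypothesis on $(b_k)_{k\geq 1}$.

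First I would record the elementary tangent line inequality: since $f(t)=t^{-s}$ is strictly convex on $(0,\infty)$ for $s>0$, for every $a>0$ and $t>0$ we have $f(t)\geq f(a)+f'(a)(t-a)=(s+1)a^{-s}-s\,a^{-s-1}t$, with equality precisely when $t=a$. Applying this with $t=\lambda_k$ and $a=a_k$ and summing over $k=1,\dots,n$ gives
\[
\sum_{k=1}^n \lambda_k^{-s}\geq \sum_{k=1}^n\left[(s+1)a_k^{-s}-s\,a_k^{-s-1}\lambda_k\right].
\]
It therefore suffices to prove $\sum_{k=1}^n a_k^{-s-1}\lambda_k\leq \sum_{k=1}^n a_k^{-s-1}b_k$, i.e.\ that $\lambda_k$ may be replaced by $b_k$ inside this weighted sum.

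The key step is Abel summation. Set $c_k:=a_k^{-s-1}$; since $(a_k)_{k\geq1}$ is positive and non-decreasing and $s+1>0$, the sequence $(c_k)_{k\geq1}$ is positive and non-increasing. Writing $S_m=\sum_{k=1}^m\lambda_k$ and $T_m=\sum_{k=1}^m b_k$ (with $S_0=T_0=0$), summation by parts yields
\[
\sum_{k=1}^n c_k\lambda_k=c_nS_n+\sum_{k=1}^{n-1}(c_k-c_{k+1})S_k .
\]
Each coefficient $c_n$ and $c_k-c_{k+1}$ is non-negative, and $S_m\leq T_m$ for every $m$ by hypothesis, so the right-hand side is bounded above by $c_nT_n+\sum_{k=1}^{n-1}(c_k-c_{k+1})T_k$, which by the same manipulation in reverse equals $\sum_{k=1}^n c_kb_k$. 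Combining this with the displayed convexity estimate gives \eqref{zeta}. I expect this summation-by-parts bookkeeping — in particular getting the direction of the inequality right, since it cannot be obtained term by term — to be the only real subtlety.

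Finally, for the last assertion, suppose $(b_k)_{k\geq1}$ is itself positive and non-decreasing, so that the choice $a_k=b_k$ is an admissible sequence in the theorem. Applying the same tangent line inequality with $t=b_k$ and $a=a_k$ gives, for each $k$,
\[
(s+1)a_k^{-s}-s\,a_k^{-s-1}b_k\leq b_k^{-s},
\]
with equality precisely when $a_k=b_k$. Summing over $k=1,\dots,n$ shows that the right-hand side of \eqref{zeta} never exceeds $\sum_{k=1}^n b_k^{-s}$, and this value is attained when $a_k=b_k$ for all $1\leq k\leq n$; hence the maximum is achieved there.
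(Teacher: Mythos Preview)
Your proof is correct and takes a genuinely different route from the paper's. The paper uses the integral representation
\[
\lambda^{-s}=s(s+1)\int_0^\infty \alpha^{-s-2}[\alpha-\lambda]_+\,d\alpha
\]
(following Laugesen) and then manipulates $\sum_k\bigl([\alpha-\lambda_k]_+-[\alpha-a_k]_+\bigr)$ inside the integral; the monotonicity of $(a_k)$ enters because the indicator $\mathbf{1}_{\{\alpha\geq a_k\}}$ picks out an initial segment $k\leq m(\alpha)$, to which the partial-sum hypothesis $\sum_{k\leq m}\lambda_k\leq\sum_{k\leq m}b_k$ can be applied. You instead first use the tangent-line inequality for the convex map $t\mapsto t^{-s}$ to replace $\lambda_k^{-s}$ by $(s+1)a_k^{-s}-s a_k^{-s-1}\lambda_k$, and then invoke Abel summation with the non-increasing weights $c_k=a_k^{-s-1}$ to pass from $\lambda_k$ to $b_k$. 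The two arguments are closely related---integrating the paper's indicator $\mathbf{1}_{\{\alpha\geq a_k\}}$ against $\alpha^{-s-2}$ produces exactly your weight $c_k$, so the ``initial segment'' step and your Abel step are the same inequality in disguise---but your packaging is more elementary: it avoids the integral representation entirely, isolates convexity as the sole analytic input, and makes transparent why monotonicity of $(a_k)$ is needed. The paper's integral approach, on the other hand, is more flexible for variants such as Theorem~\ref{th:sumpower}, where two interlocking families of eigenvalues are handled simultaneously inside a single integral. Your treatment of the maximizing statement via the same tangent-line inequality (applied at $t=b_k$) is also cleaner than the paper's, which differentiates $g_k(a_k)=(s+1)a_k^{-s}-s a_k^{-s-1}b_k$ directly.
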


Notice that if $a_k=b_k$ for all $k\geq 1$, then \eqref{zeta} simplifies to $\sum_{k=1}^n (\lambda_k)^{-s} \geq \sum_{k=1}^n (b_k)^{-s}$.

\begin{proof}
We will use the notation $[y]_+$, $y\in\R$, to mean the expression taking on the value $y$ if $y\geq 0$ and zero otherwise; $[f(x)]_{g(x)\geq y}$
will 
represent $f(x)$ if $g(x) \geq y$ and zero otherwise. We start with the following identity, valid for $\lambda > 0$,
\begin{equation}
\label{powerrep}
	\lambda^{-s} = s(s+1)\dint_0^\infty \alpha^{-s-2}[\alpha-\lambda]_+\dx{\alpha}
\end{equation}
For $n\geq 1$, $s>0$ arbitrary, applying this to both $\lambda_k$ and $a_k$, we have
\begin{displaymath}
\begin{split}
\sum_{k=1}^n \left(\lambda_k^{-s}-a_k^{-s}\right) &= s(s+1)\dint_0^\infty \alpha^{-s-2}
	\sum_{k=1}^n \left([\alpha-\lambda_k]_+-[\alpha-a_k]_+\right)\dx{\alpha}\\
	&\geq s(s+1)\dint_0^\infty \alpha^{-s-2}\sum_{k=1}^n [a_k-\lambda_k]_{\alpha\geq a_k}\dx{\alpha}\\
	&\geq s(s+1)\dint_0^\infty \alpha^{-s-2}\sum_{k=1}^n [a_k-b_k]_{\alpha\geq a_k}\dx{\alpha}\\
	&=\sum_{k=1}^n s(s+1)(a_k-b_k)\dint_{a_k}^\infty \alpha^{-s-2}\dx{\alpha},
\end{split}
\end{displaymath}
which after simplification and rearrangement gives us \eqref{zeta}. (Note that we needed the sequence $a_k$ to be weakly increasing to justify the third line above.) For the maximizing property we consider each term on the right-hand side of \eqref{zeta} as a function of $a_k$ by setting $g_k(a_k):= (s+1)(a_k)^{-s}-s(a_k)^{-s-1}b_k$. Differentiating in $a_k$ shows that $g_k$ reaches its unique maximum when $a_k=b_k$.
\end{proof}

\begin{proof}[Proof of Theorem~\ref{th:sumpower}]
Keeping the notation from the proof of Theorem~\ref{th:zeta} and using \eqref{powerrep}, we have
\begin{displaymath}
\begin{split}
\sum_{k=1}^n \lambda_k^{-s} &+\sum_{k=0}^n \mu_k^{-s}
=s(s+1)\dint_0^\infty \alpha^{-s-2}\left(\sum_{k=1}^n [\alpha-\lambda_k]_+ +\sum_{k=0}^n
	[\alpha-\mu_k]_+\right)\dx{\alpha}\\
&\geq s(s+1)\dint_0^\infty \alpha^{-s-2}\left(\sum_{k=1}^n [2\alpha-\lambda_k-\mu_k]_{\alpha \geq k^2+\frac{q_0}{2}}+
	[\alpha-\mu_0]_{\alpha\geq \frac{q_0}{2}}\right)\dx{\alpha}.\\
\end{split}
\end{displaymath}
For each fixed $\alpha\geq q_0/2+1$, the sum in the latter integral is from $k=1$ to some $m=m(\alpha) \leq n$; if $\alpha \in [q_0/2, q_0/2+1)$, then the bracketed term reduces to $\alpha-\mu_0$, and otherwise it is zero. This means that for each fixed $\alpha\geq q_0/2+1$ we may apply Theorem~\ref{finitetracecomb} (or Theorem~\ref{th:finitetraceneu} with $n=0$ if $\alpha \in [q_0/2, q_0/2+1)$) to obtain
\begin{displaymath}
\sum_{k=1}^{m(\alpha)} [2\alpha-\lambda_k-\mu_k]_{\alpha \geq k^2+\frac{q_0}{2}}
+[\alpha-\mu_0]_{\alpha\geq \frac{q_0}{2}}\leq \sum_{k=1}^{m(\alpha)} [2\alpha-2k^2-q_0]_+ +[\alpha-q_0/2]_+
\end{displaymath}
for each $\alpha>0$. Substituting this back into the above expression for $\sum_{k=1}^n \lambda_k^{-s} +\sum_{k=0}^n \mu_k^{-s}$ and 
applying \eqref{powerrep} in the other direction yields the theorem.
\end{proof}

\section{The case of equality}
\label{sec:equality}

Finally, we will prove the sharpness of our inequalities, in the sense that equality for some $n\geq 1$ in a bound of the form
\eqref{sumprinciple} forces the potential $q$ to be a constant. This stems from the fact that the only functions that can minimize the Rayleigh 
quotient expression \eqref{rayleighsum} are sums of eigenfunctions of the corresponding equation. Although we doubt this is new, we do not know 
of any explicit reference in the literature and so give a proof here. 
We suppose we have the equation \eqref{lapln} in any one of the cases considered and two different potentials $q_1, q_2\in L^\infty(\Omega)$. 
We denote by $V = H^1_0 (\Omega)$ or $H^1(\Omega)$ the appropriate Hilbert space. We will write $\lambda_k(q_2)$, $k\geq 1$ for the eigenvalues of the problem associated with $q_2$, ordered by increasing size and repeated according to multiplicities, and  $\psi_k$ for the corresponding eigenfunctions which form an orthonormal basis of $L^2(\Omega)$.

\begin{lemma}
\label{lemma:equality}
Suppose that for some $n\geq 1$,
\begin{equation}
\label{eq}
\sum_{k=1}^{n} \lambda_k(q_1) = \sum_{k=1}^n \mathcal{R}[q_1,\psi_k].
\end{equation}
Then there exist eigenfunctions $\varphi_k$, $k\geq 1$ corresponding to $\lambda_k(q_1)$ (ordered by increasing magnitude) such that
\begin{equation}
\label{span}
	\spn\{\psi_1,\ldots,\psi_n\} = \spn\{\varphi_1,\ldots,\varphi_n\}
\end{equation}
in $L^2(\Omega)$, that is, each $\psi_k$ may be expressed as a finite linear combination $\psi_k(x)=\sum_{m=1}^n a^k_m\varphi_m(x)$ for
suitable constants $a^k_m \in\R$.
\end{lemma}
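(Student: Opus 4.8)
The plan is to exploit the fact that equality in the sum of Rayleigh quotients \eqref{eq} forces each $\psi_k$ to lie in a very restricted subspace. Expand each test function $\psi_k$ in the orthonormal eigenbasis $\{\varphi_m\}$ of the operator associated with $q_1$, writing $\psi_k = \sum_{m=1}^\infty a^k_m \varphi_m$ with $a^k_m = \langle \psi_k, \varphi_m\rangle$ and $\sum_m (a^k_m)^2 = 1$. Since the $\psi_k$ are orthonormal in $L^2(\Omega)$, the $n\times\infty$ matrix $A = (a^k_m)$ has orthonormal rows, so $\sum_{k=1}^n (a^k_m)^2 \leq 1$ for each $m$ and $\sum_{k=1}^n \sum_{m=1}^\infty (a^k_m)^2 = n$. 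Using that $\mathcal{R}[q_1,\psi_k] = \langle \psi_k, H_1\psi_k\rangle$ where $H_1 = -\Delta + q_1$, and that $H_1\varphi_m = \lambda_m(q_1)\varphi_m$, one computes
\begin{displaymath}
	\sum_{k=1}^n \mathcal{R}[q_1,\psi_k] = \sum_{k=1}^n \sum_{m=1}^\infty \lambda_m(q_1)(a^k_m)^2 = \sum_{m=1}^\infty \lambda_m(q_1)\, c_m,
\end{displaymath}
where $c_m := \sum_{k=1}^n (a^k_m)^2 \in [0,1]$ and $\sum_{m=1}^\infty c_m = n$.

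The heart of the argument is then a rearrangement/extremal principle: among all sequences $(c_m)$ with $0\leq c_m\leq 1$ and $\sum_m c_m = n$, the quantity $\sum_m \lambda_m(q_1) c_m$ is minimized (given that the $\lambda_m(q_1)$ are non-decreasing) precisely by $c_1 = \cdots = c_n = 1$, $c_m = 0$ for $m > n$, with this minimum value equal to $\sum_{k=1}^n \lambda_k(q_1)$. Since by hypothesis \eqref{eq} the actual value $\sum_m \lambda_m(q_1) c_m$ equals $\sum_{k=1}^n\lambda_k(q_1)$, we are at the minimum. I would then argue that the minimizer is unique up to the ambiguity coming from eigenvalues of multiplicity greater than one: if $\lambda_n(q_1) < \lambda_{n+1}(q_1)$ strictly, the minimizer is forced to be $c_m = 1$ for $m\leq n$ and $c_m = 0$ for $m > n$; in general, $c_m = 1$ for all $m$ with $\lambda_m(q_1) < \lambda_n(q_1)$ and $c_m = 0$ for all $m$ with $\lambda_m(q_1) > \lambda_n(q_1)$, while the $c_m$ corresponding to the eigenvalue $\lambda_n(q_1)$ itself sum to the appropriate integer but are otherwise unconstrained. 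In either case, $c_m = 0$ for $m$ large means $a^k_m = 0$ for all $k$ and all such $m$, i.e.\ each $\psi_k$ lies in the span of the finitely many $\varphi_m$ with $\lambda_m(q_1)\leq \lambda_n(q_1)$.

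To get exactly \eqref{span} (with the same index $n$ on both sides), I would then invoke a dimension count: the $\psi_1,\ldots,\psi_n$ are linearly independent, hence span an $n$-dimensional subspace $S$; we have just shown $S \subseteq \spn\{\varphi_m : \lambda_m(q_1)\leq\lambda_n(q_1)\}$. If all eigenvalues up to $\lambda_n(q_1)$ are simple this latter space is itself $n$-dimensional and we are done. If there is degeneracy at the top, $S$ is an $n$-dimensional subspace containing all eigenspaces strictly below $\lambda_n(q_1)$ together with an appropriately-dimensioned subspace of the $\lambda_n(q_1)$-eigenspace; since any subspace of an eigenspace consists of eigenfunctions, we may \emph{choose} an orthonormal basis $\varphi_1,\ldots,\varphi_n$ of $S$ consisting of eigenfunctions of $H_1$ ordered by the magnitude of their (repeated) eigenvalues, which is exactly the assertion. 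The final sentence of the lemma — that $\psi_k$ is a \emph{finite} linear combination of $\varphi_1,\ldots,\varphi_n$ — is then immediate since $\psi_k\in S = \spn\{\varphi_1,\ldots,\varphi_n\}$.

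The main obstacle I anticipate is handling the degenerate case cleanly: the extremal sequence $(c_m)$ is not unique when $\lambda_n(q_1)$ has multiplicity greater than one, so one cannot simply read off "$a^k_m = \delta$"-type relations, and the argument must be phrased at the level of subspaces (and the freedom to re-choose the $\varphi_k$ within eigenspaces exploited) rather than at the level of individual coefficients. A secondary technical point is justifying the interchange of summation in $\sum_{k,m}\lambda_m(q_1)(a^k_m)^2$ and the convergence of $\sum_m \lambda_m(q_1)c_m$; since each $\psi_k\in V = H^1_0(\Omega)$ or $H^1(\Omega)$ lies in the form domain of $H_1$, the expansion $\sum_m \lambda_m(q_1)(a^k_m)^2 = Q_1(\psi_k) < \infty$ is standard, so this should be routine. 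Once Lemma~\ref{lemma:equality} is in hand, the earlier claims ("equality implies $q$ constant" in Theorems~\ref{th:finitetracedir}, \ref{th:finitetraceneu}, etc.) follow by taking $q_2 = 0$: then $S = \spn\{\sin(kx)\}_{k=1}^n$ (or the cosine analogue) would have to be spanned by eigenfunctions of $-\Delta + q$, which on comparing the equations $-\Delta\psi_k = k^2\psi_k$ and $(-\Delta+q)\varphi = \lambda\varphi$ forces $q$ to act as a constant on this span, and a short argument then gives $q$ constant a.e.
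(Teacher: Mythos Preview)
Your proposal is correct and follows essentially the same route as the paper: expand $\psi_k$ in the eigenbasis $\{\varphi_m\}$, set $c_m=\sum_{k=1}^n (a^k_m)^2$, observe the constraints $c_m\in[0,1]$ and $\sum_m c_m=n$, and conclude from equality in \eqref{eq} that $c_m=0$ whenever $\lambda_m(q_1)>\lambda_n(q_1)$, finishing with a dimension count to handle possible multiplicity at $\lambda_n(q_1)$. Your treatment of the degenerate case and the convergence issues is in fact slightly more explicit than the paper's, but the argument is the same.
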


Of course, if $\lambda_n(q_1)$ is not simple, then we need to choose the right eigenfunction(s) $\varphi_n$ (and possibly $\varphi_{n-1},\ldots,\varphi_{n-m}$) in the corresponding eigenspace. We also note that this is really an abstract result which is true for any two positive, self-adjoint operators on a (real) Hilbert space, and in particular valid in greater generality.
In general, however, it does not seem so easy to prove that $q_1-q_2$ is constant in $\Omega$ (following from $a^k_m = \delta_{km}$ in Lemma~\ref{lemma:equality}, so that the $\psi_k$ are directly eigenfunctions of $q_1$). Here, we will deal only with the case of an interval with Dirichlet or Neumann boundary conditions, and with $q_2=0$. A key element of our proof, as in Section~\ref{sec:finite}, is the fact that products of eigenfunctions of the zero potential, namely sines and cosines, are mutually orthogonal in the relevant $L^2$-space. We expect the same idea should work on the $N$-dimensional 
torus, since the eigenfunctions are complex exponentials, but the argument is complicated by various issues related to the multiplicity of the eigenvalues, and we do not explore it here. So we now return to having $q_2\equiv 0$ and labelling $q_1$ as $q$, given by \eqref{fourier}.

\begin{theorem}
\label{th:uniqueness}
Suppose that for some $n\geq 1$, there is equality in \eqref{finitetracedir}, respectively \eqref{finitetraceneu}. Then $q(x)$ is constant in $(0,\pi)$ with eigenfunctions given by $\varphi_k(x)=\sin(kx)$, $k\geq 1$, and $\varphi_k(x)=\cos(kx)$, $k\geq 0$, in the Dirichlet and Neumann cases, respectively.
\end{theorem}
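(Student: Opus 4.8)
The plan is to combine Lemma~\ref{lemma:equality} (with $q_2 \equiv 0$) with the explicit orthogonality structure of products of sines and cosines. Suppose equality holds in \eqref{finitetracedir} for some $n\geq 1$; then by Lemma~\ref{lemma:equality} there are Dirichlet eigenfunctions $\varphi_1,\ldots,\varphi_n$ of $q$ whose span equals $\spn\{\sin x,\ldots,\sin nx\}$. In particular, each $\varphi_m$ is a finite linear combination $\varphi_m(x)=\sum_{k=1}^n c_{mk}\sin(kx)$. The first step is to feed this back into the eigenvalue equation $-\varphi_m'' + q\varphi_m = \lambda_m(q)\varphi_m$: since $-\frac{d^2}{dx^2}\sin(kx)=k^2\sin(kx)$, we get $q(x)\varphi_m(x) = \sum_{k=1}^n c_{mk}(\lambda_m(q)-k^2)\sin(kx)$, i.e.\ $q\varphi_m$ again lies in $\spn\{\sin x,\ldots,\sin nx\}$ for each $m$. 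Equivalently, multiplication by $q$ maps the $n$-dimensional space $S_n:=\spn\{\sin x,\ldots,\sin nx\}$ into itself. This is the structural fact I want to exploit.

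Next I would show that a function $q$ expandable as in \eqref{fourier} with $qS_n\subseteq S_n$ must be constant. The cleanest route: test $q\sin x \in S_n$ against $\sin(jx)$ for $j > n$. Using $q(x)=q_0/2+\sum_{\ell\geq 1} q_\ell\cos(\ell x)$ and the product-to-sum identity $\cos(\ell x)\sin(x) = \tfrac12(\sin((\ell+1)x) - \sin((\ell-1)x))$, one computes $\langle q\sin(\cdot),\sin(j\cdot)\rangle$ explicitly and the condition that this vanish for all $j\geq n+1$ forces $q_\ell = 0$ for all $\ell\geq 2$ by an easy induction (the coefficient of $\sin(jx)$ in $q\sin x$ involves $q_{j-1}-q_{j+1}$, and pushing $j$ large kills everything). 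A symmetric computation with $q\sin(2x)\in S_n$ tested against high-frequency sines handles the odd/even parity issue and also eliminates $q_1$. Hence only $q_0$ survives, so $q\equiv q_0/2$ is constant, and then the eigenfunctions are genuinely $\sin(kx)$. The Neumann case is identical, replacing $S_n$ by $\spn\{1,\cos x,\ldots,\cos nx\}$, using $\cos(\ell x)\cos(x)=\tfrac12(\cos((\ell+1)x)+\cos((\ell-1)x))$, and the same truncation argument; I would simply say ``the Neumann case follows mutatis mutandis''.

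There is one bookkeeping subtlety to address: Lemma~\ref{lemma:equality} only asserts that \emph{some} choice of eigenfunctions $\varphi_1,\ldots,\varphi_n$ spans $S_n$, and in the presence of multiple eigenvalues one must be slightly careful, but since we only use that $qS_n\subseteq S_n$ — a property of the subspace $S_n$, not of the particular basis — the multiplicity issue is harmless. I expect the main obstacle to be making the ``high-frequency truncation'' argument airtight: one has to verify that the invariance $qS_n\subseteq S_n$ really does force all Fourier coefficients $q_\ell$ with $\ell\geq 1$ to vanish, rather than merely constraining them, and this requires keeping track of which $\sin(jx)$ (or $\cos(jx)$) can appear when $q$ (a potentially infinite cosine series) multiplies a low-frequency sine. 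The key observation that makes it work is that if $q_L\neq 0$ for some largest $L\geq 1$ (or, if there is no largest one, one argues directly with the convergence of the series), then $q\sin(x)$ has a nonzero $\sin((L+1)x)$-component, contradicting membership in $S_n$ once $L+1>n$; a short induction downward from there clears the remaining coefficients. I would present this as the core computation and relegate the Neumann analogue to a one-line remark.
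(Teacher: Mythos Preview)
Your approach is correct and genuinely different from the paper's. The paper never isolates the invariance $qS_n\subseteq S_n$; instead it takes two eigenfunctions $\varphi_j,\varphi_k\in S_n$ (chosen so that both have nonzero $\sin(nx)$-component), writes the pointwise identity $\varphi_k''/\varphi_k-\varphi_j''/\varphi_j=\lambda_j(q)-\lambda_k(q)$, expands both sides as finite trigonometric polynomials in $\cos(px)$, and then runs a somewhat delicate downward induction on the resulting coefficient identities to force the ratios $a^k_m/a^k_n=a^j_m/a^j_n$ for all $m$, whence $\varphi_j=\varphi_k$, a contradiction. Your route is cleaner and more conceptual: once you observe that $qS_n\subseteq S_n$ (one line from the eigenvalue equation plus the fact that $S_n$ is invariant under $d^2/dx^2$), the problem becomes purely about Fourier multiplication, with no eigenfunctions of $q$ in sight and no need to track multiplicities or compare pairs of $\varphi$'s.

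One point to tighten: testing only $q\sin x$ (and $q\sin 2x$) against high frequencies does not by itself eliminate $q_1,\ldots,q_{n-1}$. From $q\sin x\in S_n$ you get $q_{j-1}=q_{j+1}$ for $j>n$, hence only $q_\ell=0$ for $\ell\ge n$; there is no ``parity issue'', just a low-frequency residue. The quickest fix is to use $q\sin(nx)\in S_n$ directly: the coefficient of $\sin((n+m)x)$ for $m\ge 1$ is $\tfrac12(q_m-q_{m+2n})$, so $q_m=q_{m+2n}$ for every $m\ge 1$, and absolute convergence of \eqref{fourier} then forces $q_m=0$. This single computation replaces your ``induction downward'' and handles all $m\ge 1$ at once; the Neumann analogue with $q\cos(nx)$ yields $q_m=-q_{m+2n}$ and the same conclusion.
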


\begin{proof}[Proof of Lemma~\ref{lemma:equality}]
For each $k\geq 1$, as in Section~\ref{sec:trace} we write $\psi_k = \sum_{m=1}^\infty a_m^k \varphi_m$, where 
$a_m^k = \langle \psi_k,\varphi_m \rangle$ and the $\varphi_k$ are, for the meantime, an arbitrary set of eigenfunctions for $q_1$,
in the sense that we allow an arbitrary decomposition of any eigenspace of dimension $\geq 2$. 
The orthonormality relations imply $\sum_{m=1}^\infty a_m^k a_m^l = \delta_{kl}$. Using \eqref{eq} and denoting by $Q_1$ the bilinear 
form associated with $q_1$ (cf~\eqref{rayleighsum}) we have
\begin{displaymath}
\sum_{k=1}^n \lambda_k(q_1) = \sum_{k=1}^n \mathcal{R}[q_1,\psi_k]
= \sum_{k=1}^n Q_1 \left( \sum_{m=1}^\infty a_m^k \varphi_m, \sum_{l=1}^\infty a_l^k \varphi_l \right).
\end{displaymath}
Since everything converges, and since $Q_1(\varphi_m,\varphi_l) = \delta_{ml} \lambda_m (q_1)$, this reduces to
\begin{equation}
\label{weightesums}
	\sum_{k=1}^n \lambda_k(q_1) = \sum_{k=1}^n \sum_{m=1}^\infty (a_m^k)^2 \lambda_m (q_1) 
	= \sum_{m=1}^\infty \left( \sum_{k=1}^n (a_m^k)^2\right) \lambda_m (q_1).
\end{equation}
Since the functions $\varphi_m = \sum_{k=1}^\infty a_m^k \psi_k$ are also normalized, we have $\sum_{k=1}^n (a_m^k)^2 \leq 1$ for all $m\geq 1$ 
and $\sum_{m=1}^\infty\left(\sum_{k=1}^n (a_m^k)^2\right)=n$. Hence the only way we can have equality in \eqref{weightesums} 
is if the coefficient of $\lambda_m (q_1)$ in the sum on the right-hand side is zero whenever $\lambda_m (q_1) > \lambda_n (q_1)$, which 
means by definition of the $a_m^k$ that $\spn_{L^2(\Omega)}\{\psi_k\}_{k=1}^n$ is contained in the union of the eigenspaces associated with 
$\lambda_1(q_1),\ldots,\lambda_n(q_1)$ (which may be more than $n$-dimensional if $\lambda_n$ is not simple). However, since 
$\spn_{L^2(\Omega)}\{\psi_k\}_{k=1}^n$ is $n$-dimensional, we can find a decomposition of the eigenspace associated with $\lambda_n(q_1)$ 
so that $\spn_{L^2(\Omega)}\{\psi_k\}_{k=1}^n$ is equal to the span of the corresponding first $n$ eigenfunctions for $q_1$.
\end{proof}

\begin{proof}[Proof of Theorem~\ref{th:uniqueness}]
We will give the proof in detail only for the Dirichlet case \eqref{finitetracedir}; it is elementary, though tedious, to work 
through the (very similar) details in the Neumann case \eqref{finitetraceneu}, and so we will briefly sketch the proof of the Neumann case afterwards. Supposing that equality holds in~\eqref{finitetracedir} for some $n\geq 2$ (the case $n=1$ being trivial), denoting by $\varphi_k$, $k=1,\ldots, n$ the first $n$ eigenfunctions associated with $q$, without loss of generality chosen and numbered so that the conclusion of Lemma~\ref{lemma:equality} holds, since $\varphi_k(x) \neq 0$ almost everywhere in $(0,\pi)$, for each $k\geq 1$ we may write
\begin{displaymath}
	q(x) = \lambda_k(q)+\frac{\varphi_k''(x)}{\varphi_k(x)},
\end{displaymath}
which is valid pointwise almost everywhere. In particular, this means that for all $j,k=1,\ldots,n$ and almost all $x\in (0,\pi)$,
\begin{equation}
\label{eigenfndiff}
	\frac{\varphi_k''(x)}{\varphi_k(x)}=C_{k,j} +\frac{\varphi_j''(x)}{\varphi_j(x)},
\end{equation}
where $\R \ni C_{k,j}=\lambda_k(q)-\lambda_j(q)$. Since by Lemma~\ref{lemma:equality} we have
\begin{displaymath}
	\varphi_k(x)=\sum_{m=1}^n a_m^k \sin(mx)
\end{displaymath}
for appropriate $a_m^k \in \R$, after rearranging, we may rewrite \eqref{eigenfndiff} explicitly as
\begin{multline*}
\left[\sum_{m=1}^n m^2 a_m^k \sin(mx)\right]\left[\sum_{l=1}^n a_l^j \sin(lx)\right]
= \\ -C_{k,j}\left[\sum_{m=1}^n a_m^k \sin(mx)\right]\left[\sum_{l=1}^n a_l^j \sin(lx)\right]+
\left[\sum_{m=1}^n a_m^k \sin(mx)\right]\left[\sum_{l=1}^n l^2 a_l^j \sin(lx)\right]
\end{multline*}
which in turn may be rewritten as
\begin{multline}
\label{cosequality}
\sum_{l,m=1}^n m^2 a_m^k a_l^j \left[\cos(l-m)x -\cos(l+m)x \right]= -C_{k,j}
\sum_{l,m=1}^n a_m^k a_l^j \, \cdot \\ \cdot \left[\cos(l-m)x -\cos(l+m)x \right]+
\sum_{l,m=1}^n l^2 a_m^k a_l^j \left[\cos(l-m)x -\cos(l+m)x \right].
\end{multline}
We observe that \eqref{cosequality} is a sum of the form
\begin{displaymath}
	\sum_{p=0}^{2n} b_p \cos(px)=0,
\end{displaymath}
where the coefficients $b_p$ are obtained by summing all the relevant coefficients of $\cos(l-m)x$ with $|l-m|=p$ and $\cos(l+m)x$ with $l+m=p$. Since this holds for almost every $x\in(0,\pi)$, orthogonality of the (finite) family $\cos(px)$ implies that $b_p=0$ for all $p\geq 0$; we will use this to show that no combination of coefficients $a_m^k$ other than $a_m^k = c \delta_{mk}$ (with $c$ a normalizing constant) can satisfy \eqref{eigenfndiff}, and hence no non-constant $q$ is possible.

To do so we make a particular choice of $j,k$: without loss of generality, we may assume there exist two distinct eigenfunctions
$\varphi_j$, $\varphi_k$ such that $a_n^j, a_n^k \neq 0$, that is, both have non-zero $L^2$-projection onto $\spn\{\sin(nx)\}$; otherwise,
\eqref{span} 
forces $\varphi_i(x)=\sin(nx)$ for some $1\leq i\leq n$, and by comparing the two eigenfunction equations of the form \eqref{stlibdd} that $\sin(nx)$
must therefore satisfy, a routine argument shows that $q$ must be constant. So let us assume we have our $\varphi_j$ and $\varphi_k$
and consider the equation for $b_{2n}$. That is, equating coefficients of $\cos(2nx)$ in \eqref{cosequality},
\begin{displaymath}
	n^2 a_n^k a_n^j = -C_{k,j} a_n^k a_n^j + n^2 a_n^k a_n^j.
\end{displaymath}
Since $a_n^j, a_n^k \neq 0$, this implies $C_{k,j}=0$, that is, $\lambda_k(q)=\lambda_j(q)$. We claim that in this case
\begin{equation}
\label{equalratios}
	\frac{a_m^k}{a_n^k} = \frac{a_m^j}{a_n^j}
\end{equation}
for all $m=1,\ldots,n-1$. Let us first show why this proves the theorem. Assuming \eqref{equalratios} holds, 
and writing $a_m^k = c_m a_n^k$, $a_m^j = c_m a_n^j$ for $c_m \in \R$,
\begin{displaymath}
\sum_{m=1}^n c_m^2(a_n^k)^2 = \sum_{m=1}^n (a_m^k)^2 = \|\varphi_k\|_2^2 = \|\varphi_j\|_2^2
= \sum_{m=1}^n (a_m^j)^2 = \sum_{m=1}^n c_m^2(a_n^j)^2,
\end{displaymath}
implying $a_n^k = a_n^j$. \eqref{equalratios} now implies inductively that $a_m^k=a_m^j$ for all $m=1,\ldots,n$, that is, $\varphi_k = \varphi_j$, 
contradicting our assumption that $\varphi_k$ and $\varphi_j$ were two distinct eigenfunctions with non-trivial projection onto $\spn\{\sin(nx)\}$.
The only possibility is therefore that $\sin(nx)$ is itself an eigenfunction associated with $q(x)$, which implies $q$ is constant, as can be seen directly from the equation \eqref{stlibdd}.

It remains to prove \eqref{equalratios}. We will proceed by induction on $p$ from $2n$ down to $n+1$, equating the coefficients of $\cos(l+m)x$, 
$l+m=p$ in~\eqref{cosequality} in order to obtain \eqref{equalratios} for $a_{p-n}^k$, $a_{p-n}^j$. Observe that for each $p=n+1,\ldots,2n$,
\eqref{equalratios} reduces in this case to
\begin{equation}
\label{reduceddiff}
\sum (m^2-l^2)a_m^k a_l^j = 0,
\end{equation}
where the sum is over all $l,m=1,\ldots,n$ such that $l+m=p$. When $p=2n-1$, this says
\begin{displaymath}
	(n-1)^2 a_{n-1}^k a_n^j+n^2 a_n^k a_{n-1}^j = n^2 a_{n-1}^k a_n^j+(n-1)^2 a_n^k a_{n-1}^j,
\end{displaymath}
which upon rearrangement gives \eqref{equalratios} for $n-1$. Suppose now that \eqref{equalratios} holds for $p=2n-1,\ldots,n+i+1$, $i \geq 1$. 
Taking\eqref{reduceddiff} for $p=n+i$ and dividing through by $a_n^k a_n^j$, we have
\begin{displaymath}
\sum_{m=i}^n m^2 \frac{a_m^k}{a_n^k}\frac{a_{n+i-m}^j}{a_n^j}=
\sum_{l=i}^n l^2\frac{a_{n+i-l}^k}{a_n^k}\frac{a_l^j}{a_n^j}.
\end{displaymath}
Using the induction hypothesis that \eqref{equalratios} holds for $m=i+1,\ldots,n$, we see we can cancel all but the first terms in the above
equality, 
leaving
\begin{displaymath}
i^2 \frac{a_i^k}{a_n^k}\frac{a_n^j}{a_n^j} = i^2 \frac{a_n^k}{a_n^k}\frac{a_i^j}{a_n^j}.
\end{displaymath}
Hence \eqref{equalratios} holds for $i$, proving our claim.

Finally, let us remark that in the case of Neumann boundary conditions \eqref{finitetraceneu}, \eqref{eigenfndiff} is unchanged, while the expression for $\varphi_k$ is now
\begin{displaymath}
	\varphi_k(x) = \sum_{m=0}^n a_m^k \cos(mx),
\end{displaymath}
meaning \eqref{cosequality} is the same, only with expressions of the form $[\cos(l-m)x+\cos(l+m)x]$ replacing $[\cos(l-m)x -\cos(l+m)x]$, and with 
summation from $m=0$ to $n$ rather than $1$ to $n$, meaning our induction proceeds down to $p=n$. Otherwise, the argument is the same.
\end{proof}

\subsection*{Acknowledgements}
 J.B.K. was supported by a grant within the scope of the Funda\c c\~{a}o para a Ci\^{e}ncia e Tecnologia's project PTDC/\ MAT/\ 101007/2008 
 and by a fellowship of the Alexander von Humboldt Foundation, Germany. P.F. was partially supported by FCT's project 
 PTDC/\ MAT-CAL/\ 4334/2014.

\end{document}